\providecommand{\U}[1]{\protect\rule{.1in}{.1in}}
\numberwithin{equation}{section}
\newtheorem{theorem}{Theorem}[section]
\newtheorem{lemma}[theorem]{Lemma}
\newtheorem{corollary}[theorem]{Corollary}
\newtheorem{proposition}[theorem]{Proposition}
\newtheorem{remark}[theorem]{Remark}
\def\<{\langle}
\def\>{\rangle}
\def\d{{\rm d}}
\def\L{\mathcal{L}}
\def\E{\mathbb{E}}
\def\N{\mathbb{N}}
\def\P{\mathbb{P}}
\def\R{\mathbb{R}}
\def\T{\mathbb{T}}
\def\Z{\mathbb{Z}}
\def\eps{\varepsilon}
\begin{document}

\title{Mean field limit of point vortices with environmental noises to deterministic 2D Navier-Stokes equations}

\author{Franco Flandoli\footnote{Email: franco.flandoli@sns.it. Scuola Normale Superiore di Pisa, Piazza dei Cavalieri, 7, 56126 Pisa, Italy.} \quad
Dejun Luo\footnote{Email: luodj@amss.ac.cn. Key Laboratory of RCSDS, Academy of Mathematics and Systems Science, Chinese Academy of Sciences, Beijing 100190, China, and School of Mathematical Sciences, University of the Chinese Academy of Sciences, Beijing 100049, China. }}

\maketitle

\vspace{-5mm}

\begin{abstract}
We consider point vortex systems on the two dimensional torus perturbed by environmental noise. It is shown that, under a suitable scaling of the noises, weak limit points of the empirical measures are solutions to the vorticity formulation of deterministic 2D Navier-Stokes equations.
\end{abstract}

\textbf{Keywords:} Mean field limit, point vortex, environmental noise, 2D Navier-Stokes equation, entropy

\textbf{Mathematics Subject Classification:}  60K35, 60K37, 35R60

\section{Introduction}

The mean field limit is widely used to derive macroscopic PDEs from large systems of
interacting particles, as a useful method to reduce the complexity of systems.
It is often natural to consider particle
systems subjected to random perturbations, mainly by independent Brownian
motions; the limit mean field PDEs in this case are nonlinear parabolic equations,
called the McKean-Vlasov equations \cite{McKean}. The coupling method is very efficient
to treat Lipschitz continuous interaction kernels, giving rise to explicit
convergence rate of empirical measures to solutions of the mean field equation,
see the classical work of Sznitman \cite{Sznitman} and also \cite{Meleard}.
Particle systems with singular kernels have attracted a lot of attention, a notable
example being the point vortex model for 2D Euler equations with the
Biot-Savart kernel as the interaction kernel, see e.g. \cite{MP82, Schochet96}
for the deterministic case and \cite{Osada, Meleard01, FourHauMis, JabinWang} for the stochastic case.
For exchangeable systems, it is well known that the mean field limit is equivalent
to the phenomenon of propagation of chaos, cf. \cite[p.177, Proposition 2.2]{Sznitman}
and also \cite{MM13, HM14} for stronger notions of chaos.
The readers can find more detailed accounts of the literature in the introduction of
\cite{FourHauMis} and in the nice survey \cite{JabinWang17}.

The paper \cite{FourHauMis}, dealing with the mean field limit of stochastic
point vortices to the deterministic 2D Navier-Stokes equations, the noise
being additive and independent for each particle, states at page 1425 an open
problem concerning the generalization of the result to the case of
\textit{environmental noise}, which means that the same space-dependent noise
acts on all particles -- the action differing just by the position of the
particle, where the noise is evaluated. This open problem, which is at the origin of
the present work, has two possible faces. One is the convergence of the
empirical measure to a stochastic 2D Euler equation, where stochasticity
reflects the random environment, still present in the limit. This has been
done in \cite{Coghi-Fla} under Lipschitz condition on the interaction kernel;
see \cite{Coghi-Mau} for a scaling limit result on point vortices with
regularized Biot-Savart kernel and suitably chosen regularizing parameter,
and the recent preprint \cite{Rosenzweig} for a mean field limit without
smoothing the Biot-Savart kernel. Another face, the one
considered here, is to rescale the space covariance of the noise,
simultaneously with the increasing number of particles, in such a way that the
noise becomes more and more uncorrelated, going heuristically in the direction
of the independent additive noises acting on different particles. We present
here a partial solution to this second case, showing that any weak limit of the empirical measures is a
probability measure, time dependent, solution to the 2D Navier-Stokes equations.

The equation for the empirical measure of point vortices contains a martingale
which has to converge to zero in a scaling regime leading to the deterministic
2D Navier-Stokes equation in vorticity formulation. In the classical case of
independent noise on each particle, this
convergence is standard. In the case of environmental noise, it may be a
difficult problem, as it is here (cf. Proposition \ref{key-prop}). To overcome this difficulty we have
developed nontrivial estimates based on entropy, inspired by \cite{FourHauMis}.

It turns out that the entropy estimate plays also an important role in proving the convergence
of the nonlinear part in the equation for empirical measures. Indeed, using Young's inequality,
we are able to derive a uniform estimate on the expected value of the Hamiltonian of
random point vortices, see Lemma \ref{lem-Hamiltonian} below.
As in the deterministic theory (see e.g. \cite{Schochet96}), such estimate implies non-concentration
of point vortices, as well as the fact that weak limits of empirical measures are continuous measures
containing no delta Dirac mass. These results are crucial for showing the convergence of the nonlinear part
to the desired limit as the number of vortices tends to infinity.

For technical reasons, we consider point vortices on the torus $\T^2= [-1/2,1/2]^2$, endowed with periodic boundary condition. This allows the explicit choice of a family of divergence free vector fields $\{\sigma_k \}_k$ (see below) and simplifies some computations; moreover, the compactness of torus makes it easier for integrability arguments (see e.g. Proposition \ref{key-prop}). Let $K:\T^2 \to \R^2$ be the Biot-Savart kernel on the 2D torus, whose basic properties will be recalled at the beginning of Section 2. Here we just mention that $K$ is singular near the origin since $|K(x)|\sim \frac1{|x|}$ as $|x|\to 0$. We consider the following system of $N$-point vortices perturbed by multiplicative noises: for $i=1,\ldots, N$,
  \begin{equation}\label{particle-systems.0}
  \d X^{N,i}_t = \frac1N \sum_{j=1,j\neq i}^N K\big( X^{N,i}_t - X^{N,j}_t \big)\,\d t + \d W\big(t, X^{N,i}_t \big),\quad X^{N,i}_t =X^i_0,
  \end{equation}
where $\{X^i_0\}_{i\geq 1}$ is an i.i.d. sequence of random variables on $\T^2$ whose law will be specified below, and $W(t,x)$ is a space-time noise, white in time and colored in space, modelling the random environment in which the vortices evolve. Unlike in the usual particle systems where different particles are perturbed by mutually independent Brownian noises (see e.g.  \cite{Sznitman, FourHauMis, JabinWang} and the survey \cite{JabinWang17}), the noise in \eqref{particle-systems.0} is the same for each particle, that is, the random vector field $W(t,x)$.  Such noise is called an environmental noise.

Under quite general conditions on the spatial covariance function of $W(t,x)$ (cf. the second paragraph on p.107 of \cite{Kunita} or Theorem 4.2.5 therein for an abstract result), one can represent the field $W(t,x)$ as a random series. In this work, we assume that
  $$W(t,x)= \sum_{k\in \Z^2_0} \theta_k\, \sigma_k(x) W^k_t ,$$
where $\Z^2_0= \Z^2\setminus \{0\}$ is the set of nonzero integer points, $\{\sigma_k \}_{k\in \Z^2_0}$ a family of divergence free vector fields on $\T^2$ defined as in \eqref{CONS} below, $\{W^k_\cdot \}_{k\in \Z^2_0}$ a family of independent standard Brownian motions defined on some filtered probability space $(\Omega, \mathcal F, (\mathcal F_t)_{t\geq 0} ,\P)$; finally, $\theta\in \ell^2(\Z^2_0)$, the latter being the usual space of square summable real sequences indexed by $\Z^2_0$. It is enough to consider those $\theta$ with only finitely many nonzero components (see for instance the example in Remark \ref{rem-1} below), and satisfying the symmetry property:
  \begin{equation}\label{theta}
  \theta_k = \theta_j \quad \mbox{whenever } |k|= |j|.
  \end{equation}
Using these notations, the point vortex system \eqref{particle-systems.0} can be written more precisely as follows: for $i=1,\ldots, N$,
  $$\d X^{N,i}_t = \frac1N \sum_{j=1,j\neq i}^N K\big( X^{N,i}_t - X^{N,j}_t \big)\,\d t + \sum_k \theta_k\, \sigma_k\big( X^{N,i}_t \big)\, \d W^k_t,\quad X^{N,i}_t =X^i_0. $$
Before moving forward, we remark that, under suitable nondegeneracy conditions on the noise, the stochastic point vortex system is globally well posed for Lebesgue almost every initial configuration, cf. \cite[p.1456, Theorem 8]{FGP2} (note that this result does not require the bracket generating condition in Hypothesis 1 on p.1451); see also \cite[Theorem 1.2]{LS20} for a similar result on the vortex model of mSQG equations.

As mentioned above, if we fix a noise $W(t,x)$ (i.e. fix some $\theta\in \ell^2$) and consider the mean field limit of empirical measures, then the limit equation will be a stochastic PDE, cf. \cite{Coghi-Fla, Rosenzweig}. In order to get a deterministic limit equation we need to introduce a scaling parameter in the noise part. Therefore, we take a family $\{\theta^N \}_{N\in \N} \subset \ell^2$ satisfying \eqref{theta} for each $N\in\N$, and consider the point vortex system
  \begin{equation}\label{particle-systems}
  \d X^{N,i}_t = \frac1N \sum_{j=1,j\neq i}^N K\big( X^{N,i}_t - X^{N,j}_t \big)\,\d t + \eps_N \sum_{k\in \Z^2_0} \theta^N_k \sigma_k \big( X^{N,i}_t \big)\,\d W^k_t,\quad X^{N,i}_t =X^i_0
  \end{equation}
for $i=1,\ldots, N$, where ($\nu>0$ is the noise intensity)
  \begin{equation}\label{eps-N}
  \eps_N = \frac{2\sqrt{\nu}} {\|\theta^N \|_{\ell^2}}.
  \end{equation}
Such scaling of noise is motivated by recent works \cite{Gal, FGL}, where the linear transport or 2D Euler equations driven by multiplicative noise of transport type are shown to converge to deterministic parabolic equations or 2D Navier-Stokes equations (see also \cite{FlaLuo-2} where the limit equation is the 2D Navier-Stokes driven by space-time white noise).

We denote the empirical measure by
  $$S^N_t = \frac1N \sum_{i=1}^N \delta_{X^{N,i}_t} $$
and define the covariance function
  \begin{equation}\label{covariance}
  Q_N(x,y) = \sum_{k\in \Z^2_0} \big( \theta^N_k \big)^2 \sigma_k(x) \otimes \sigma_k(y),\quad x,y\in \T^2.
  \end{equation}
It can be shown that $Q_N(x,y)$ depends only on the difference $x-y$ (cf. the proof of Lemma \ref{2-lem}) and thus it will be denoted as $Q_N(x-y)$, where $Q_N$ is a $2\times 2$ matrix-valued function defined on $\T^2$. Due to the choice of $\eps_N$ in \eqref{eps-N} and equality \eqref{useful-identity} below, it holds that
  $$\eps_N^2 Q_N(0)= 2\nu I_2 \quad \mbox{for all } N\geq 1,$$
where $I_2$ is the $2\times 2$ identity matrix. Finally, let $\mathcal P(\T^2)$ be the collection of probability measures on $\T^2$ and $H^s(\T^2)\, (s\in \R)$ the usual Sobolev spaces on $\T^2$.

Our purpose is to prove

\begin{theorem}\label{main-thm}
Let $T>0$ be given. Assume that
\begin{itemize}
\item[\rm(a)] the initial data $\{ X^i_0\}_{i\in \N}$ is a sequence of i.i.d. $\mathcal F_0$-measurable random variables with law $\mu_0 = f_0\,\d x \in \mathcal P(\T^2)$ for some density function $f_0:\T^2\to \R_+$ with finite entropy;
\item[\rm(b)] the sequence $\big\{ \theta^N \big\}_{N\in \N}$ satisfies
  \begin{equation}\label{main-thm.1}
  \lim_{N\to \infty} \eps_N^2\, Q_N(x) =0 \quad \mbox{for all } x\in \T^2\setminus \{0\}.
  \end{equation}
\end{itemize}
Then the laws $\eta_N$ of $S^N_\cdot\, (N\in \N)$ are tight on $C\big([0,T], H^{-s}(\T^2) \big)$ for any $s>1$, and any weak limit of $\{\eta_N \}_N$ is supported on weak solutions of the deterministic 2D Navier-Stokes equations in vorticity form:
  \begin{equation}\label{main-thm.2}
  \partial_t \xi + (K\ast \xi)\cdot \nabla \xi = \nu \Delta \xi, \quad \xi|_{t=0} = f_0.
  \end{equation}
\end{theorem}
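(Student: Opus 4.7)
The plan is to proceed via the classical martingale/empirical-measure method: derive the It\^o equation satisfied by $\<S^N_t,\phi\>$, establish tightness of the laws $\eta_N$, and then identify every weak limit as supported on weak solutions to \eqref{main-thm.2}. Applying It\^o's formula to $\phi(X^{N,i}_t)$ for $\phi\in C^\infty(\T^2)$, averaging in $i$, and using the antisymmetry $K(-z)=-K(z)$ to symmetrize the drift, yields
\begin{equation*}
\<S^N_t,\phi\> = \<S^N_0,\phi\> + \int_0^t\<S^N_s\otimes S^N_s, H_\phi\>\,\d s + \int_0^t\<S^N_s,\L^N\phi\>\,\d s + M^N_t(\phi),
\end{equation*}
with $H_\phi(x,y)=\tfrac12 K(x-y)\cdot[\nabla\phi(x)-\nabla\phi(y)]$, generator $\L^N\phi(x)=\tfrac{\eps_N^2}{2}\operatorname{tr}[Q_N(0)D^2\phi(x)]$, and a martingale $M^N_t(\phi)$ with bracket
\begin{equation*}
[M^N(\phi)]_t = \frac{\eps_N^2}{N^2}\int_0^t\sum_{i,j=1}^N\nabla\phi(X^{N,i}_s)\cdot Q_N\bigl(X^{N,i}_s-X^{N,j}_s\bigr)\nabla\phi(X^{N,j}_s)\,\d s.
\end{equation*}
By the symmetry \eqref{theta} and Lemma \ref{2-lem}, $Q_N(0)$ is a scalar multiple of the identity; the calibration \eqref{eps-N} is chosen precisely so that $\L^N=\nu\Delta$ for every $N$, placing the candidate limit equation \eqref{main-thm.2} in evidence.

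Tightness of $\{\eta_N\}$ on $C([0,T],H^{-s}(\T^2))$ for $s>1$ follows along standard lines. Uniform boundedness in $H^{-s}$ is immediate since each $S^N_t$ is a probability measure and $H^s\hookrightarrow C^0$ for $s>1$. Equicontinuity is obtained from a Kolmogorov-type bound on $\E|\<S^N_t-S^N_r,\phi\>|^p$ for $\phi$ in a countable dense subset of $H^s$: the diffusion term is linear in $t-r$; the symmetrized drift is controlled using $\nabla\phi\in C^0$ together with $K\in L^1(\T^2)$; and the martingale increment is bounded through Burkholder--Davis--Gundy applied to the bracket above, which is uniformly bounded since $\eps_N^2|Q_N|\le \eps_N^2 Q_N(0)$ is $O(1)$.

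Fix now a weakly convergent subsequence $\eta_{N_k}\Rightarrow\eta$; one must show that under $\eta$ the limit $S_\cdot$ satisfies \eqref{main-thm.2} in the weak sense. Two ingredients are essential. First, the martingale $M^N_t(\phi)$ must vanish: the diagonal part of its bracket is $O(1/N)$ because $\eps_N^2 Q_N(0)$ is bounded, while the off-diagonal part equals
\begin{equation*}
\int_0^t\E\<S^N_s\otimes S^N_s,\,\eps_N^2 Q_N(x-y)\nabla\phi(x)\cdot\nabla\phi(y)\>\,\d s,
\end{equation*}
which tends to zero by dominated convergence thanks to the pointwise decay \eqref{main-thm.1}, \emph{provided} the limit of the two-particle empirical distribution assigns no mass to the diagonal. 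This non-concentration is exactly what the Hamiltonian estimate of Lemma \ref{lem-Hamiltonian} delivers. Second, the nonlinear term $\<S^N\otimes S^N, H_\phi\>$ must be identified in the limit with $\int\xi_s(K*\xi_s)\cdot\nabla\phi\,\d x$; since $H_\phi$ has only a logarithmic singularity on the diagonal and since the same Hamiltonian bound forces limit measures to be atomless continuous measures in the sense of \cite{Schochet96}, one truncates a small diagonal neighbourhood, passes to the limit in the bounded part by weak convergence, and bounds the residual uniformly in $N$ by the Hamiltonian.

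The genuinely hard step is the vanishing of the martingale. For mutually independent noises the bracket is purely diagonal and is $O(1/N)$ by elementary counting; with environmental noise the off-diagonal cross terms are $O(1)$ and only disappear under the combined strength of the scaling \eqref{main-thm.1} and genuine non-concentration of pairs of particles, a property which is not available a priori for singular interactions and must be extracted from the entropy-based Hamiltonian estimate inspired by \cite{FourHauMis}. Controlling this interplay between singularity of $K$, the degeneracy of the quadratic variation, and the chosen scaling of the noise is the technical heart of the argument.
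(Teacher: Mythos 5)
Your overall architecture coincides with the paper's: It\^o formula and symmetrization to get the equation with $H_\phi$, the calibration \eqref{eps-N} together with Lemma \ref{2-lem} turning the It\^o correction into $\nu\Delta$, tightness in $C([0,T];H^{-s})$, the Hamiltonian bound (derived from the entropy estimate via Young's inequality) for non-concentration, and the diagonal cut-off $\rho_\delta$ for the nonlinear term with residual of order $\mathcal H_N/\log\frac1\delta$. All of that matches.

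The gap is in the step you yourself identify as the heart: the off-diagonal part of the martingale bracket. You propose to kill
\begin{equation*}
\eps_N^2\int_0^T\!\!\int_{\T^4}(\nabla\phi(x_1))^\ast Q_N(x_1-x_2)\nabla\phi(x_2)\,F^{N,2}_t(x_1,x_2)\,\d x_1\d x_2\,\d t
\end{equation*}
by ``dominated convergence thanks to the pointwise decay \eqref{main-thm.1}, provided the limit two-particle law puts no mass on the diagonal.'' This does not close. Dominated convergence is not available here because the integrating measure $F^{N,2}_t\,\d x_1\d x_2$ varies with $N$; and hypothesis \eqref{main-thm.1} gives only \emph{pointwise} convergence $\eps_N^2Q_N(x)\to0$ off the origin, not convergence uniform on $\{|x|\ge\delta\}$. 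Consequently, controlling the mass that $F^{N,2}_t$ places near the diagonal (which is all the Hamiltonian bound yields, via $\int_{\{|x_1-x_2|<\delta\}}F^{N,2}_t\lesssim 1/\log\frac1\delta$) is not enough: on the complement $\{|x_1-x_2|\ge\delta\}$ the integrand $\eps_N^2|Q_N|$ is only known to tend to $0$ at each point while remaining bounded by $2\nu$, and an $N$-dependent density can in principle concentrate exactly where $\eps_N^2Q_N$ has not yet decayed. What is actually needed is \emph{uniform integrability of $F^{N,2}_t$ in $N$ and $t$}, and this is what the paper extracts directly from the entropy bound $h_N(F^N_t)\le h_1(f_0)$ (Lemma \ref{lem-entropy} plus the subadditivity \eqref{entropy}): one splits the integral on $\{F^{N,2}_t\le M\}$, where $F^{N,2}_t$ is replaced by the constant $M$ and dominated convergence is applied against the \emph{fixed} Lebesgue measure using $\eps_N^2|Q_N|\le2\nu$ and \eqref{main-thm.1}, and on $\{F^{N,2}_t>M\}$, where $\int_{\{F^{N,2}_t>M\}}F^{N,2}_t\le\big(2h_2(F^{N,2}_t)+e^{-1}\big)/\log M\le\big(2h_1(f_0)+e^{-1}\big)/\log M$. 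Letting $N\to\infty$ and then $M\to\infty$ finishes the argument. So the entropy enters the martingale estimate directly as a uniform-integrability statement for the two-particle density, not through the Hamiltonian/non-concentration route you describe; without that (or a strengthened, locally uniform version of \eqref{main-thm.1}), your proof of Proposition \ref{key-prop} does not go through. A minor additional remark: in your tightness step, the drift $H_\phi$ is bounded by $C\|\nabla^2\phi\|_\infty$ because the difference $\nabla\phi(x)-\nabla\phi(y)$ cancels the $|x-y|^{-1}$ singularity of $K$; an appeal to $K\in L^1(\T^2)$ is not the right mechanism since $S^N_s\otimes S^N_s$ is atomic.
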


\begin{remark}\label{rem-1}
\begin{itemize}
\item[\rm (i)] Unfortunately, we cannot prove that the weak limits are the unique solution to \eqref{main-thm.2}, due to the lack of good estimates on the empirical measures. Indeed, we can only prove that the weak limits $\tilde \xi \in L^2\big(0,T; H^{-1}(\T^2) \big)$ almost surely, and thus the corresponding velocity $\tilde u=K\ast \tilde\xi \in L^2\big(0,T; L^2(\T^2) \big)$, cf. Corollary \ref{cor-regularity} below. In \cite[Theorem 1.5]{ChesLuo} Cheskidov and Luo proved that weak solutions in this class to the velocity form of the 2D Navier-Stokes equations are not unique. Thus the problem we leave open is an interesting one for future research.

\item[\rm (ii)] The condition \eqref{main-thm.1} is used to prove that the martingale part in \eqref{empirical-measure} (the equation for empirical measures) tends to 0 in mean square. Here is a simple example for \eqref{main-thm.1}. Let
  $$\theta^N_k = \frac1{|k|} {\bf 1}_{\{ |k|\leq N\} },\quad k\in \Z^2_0,\, N\in \N,$$
then it is clear that
  $$\eps_N^2 = 4\nu \bigg(\sum_{|k|\leq N} \frac1{|k|^2} \bigg)^{-1} \sim \frac{4\nu}{\log N}$$
and (cf. the proof of Lemma \ref{2-lem}; $k^\perp= (k_2,-k_1)$)
  $$Q_N(x) = \sum_{|k|\leq N} \frac{k^\perp \otimes k^\perp}{|k|^4} \cos(2\pi k\cdot x).$$
We know that $\lim_{N\to \infty} Q_N(x)$ exists for all $x\in \T^2\setminus \{0\}$, thus the condition \eqref{main-thm.1} holds.
\end{itemize}
\end{remark}

This paper is organized as follows. In Section 2 we first briefly recall the basic properties of the Biot-Savart kernel on $\T^2$ and define the vector fields $\{\sigma_k \}_{k\in \Z^2_0}$ used above; then, we turn to derive the equation for the empirical measures $S^N_t$, and establish a uniform estimate on the entropy of joint density functions of random point vortices \eqref{particle-systems}. The proof of Theorem \ref{main-thm} is given in Section \ref{sec-scaling-limit}, where the main difficulty is to show that the martingale parts in the equations of empirical measures vanish in the scaling limit, as well as the convergence of the nonlinear parts. The proofs rely heavily on the entropy estimate in Section \ref{subsec-entropy}.

\section{Preparations}

First, we recall some basic properties of the Biot-Savart kernel $K$ on $\T^2$. We have $K= \nabla^\perp G= (\partial_2 G, -\partial_1 G)$, where $G$ is the Green function on $\T^2$. On the whole space $\R^2$, we have the simple expression $G_{\R^2}(x)=\frac1{2\pi} \log|x|$; on $\T^2$, it is known that
  \begin{equation}\label{Green-funct}
  G(x)= \frac1{2\pi} \log|x| + r(x), \quad x\in \T^2 \setminus\{0\},
  \end{equation}
where $r$ is a smooth function on $\T^2$. By definition $K$ is smooth and divergence free away from the origin $0\in \T^2$, and $K(-x)= -K(x)$ for all $x\neq 0$; moreover, it holds that
  $$|K(x)| \sim \frac1{2\pi |x|} \quad \mbox{as } |x|\to 0. $$

Next we define the vector fields $\sigma_k,\, k\in \Z^2_0$ as follows:
  \begin{equation}\label{CONS}
  \sigma_k(x)= \frac{k^\perp}{|k|} e_k(x),\quad x\in \T^2,\, k\in \Z^2_0,
  \end{equation}
where $k^\perp= (k_2, -k_1)$ and
  $$e_k(x) = \sqrt{2}\, \begin{cases}
  \cos(2\pi k\cdot x), & k\in \Z^2_+; \\
  \sin(2\pi k\cdot x), & k\in \Z^2_-,
  \end{cases}$$
with $\Z^2_+= \{k\in \Z^2_0: (k_1>0) \mbox{ or } (k_1=0, k_2>0)\}$ and $\Z^2_- = -\Z^2_+$. Then $\{\sigma_k \}_{k\in \Z^2_0}$ is a CONS of the space of square integrable and divergence free vector fields on $\T^2$ with zero mean.

The rest of this section consists of two parts. In Section \ref{subsec-empirical} we derive the equation fulfilled by the empirical measure $S^N_t$, see \eqref{empirical-measure}. We introduce in Section \ref{subsec-entropy} the rescaled entropy functional and prove a uniform estimate for the entropy of joint density functions of point vortices. This estimate will play a crucial role in the proof of the main result.

\subsection{Equation for empirical measures} \label{subsec-empirical}

We want to find the equation satisfied by the empirical measure $S^N_t,\, t\geq 0$. Let $\phi\in C^2(\T^2)$; by \eqref{particle-systems} and the It\^o formula,
  $$\aligned
  \d \phi\big( X^{N,i}_t \big) =&\, \frac1N \sum_{j\neq i}K \big(X^{N,i}_t -X^{N,j}_t\big) \cdot \nabla\phi \big( X^{N,i}_t \big) \,\d t + \eps_N \sum_{k\in \Z^2_0} \theta^N_k (\sigma_k\cdot \nabla\phi ) \big( X^{N,i}_t \big)\,\d W^k_t \\
  &\ + \frac{\eps_N^2}2 \sum_{k\in \Z^2_0} \big( \theta^N_k \big)^2 {\rm Tr} \big[(\sigma_k \otimes \sigma_k) \nabla^2\phi \big]\big( X^{N,i}_t \big) \,\d t.
  \endaligned$$
The proof of the following key identity is similar to \cite[Lemma 2.6]{FlaLuo-2}, see also \cite[Section 2]{Gal}.

\begin{lemma}\label{2-lem}
It holds that
  \begin{equation}\label{useful-identity}
  \sum_{k\in \Z^2_0} \big( \theta^N_k \big)^2 (\sigma_k\otimes \sigma_k)(x) \equiv \frac12 \big\| \theta^N \big\|_{\ell^2}^2 I_2, \quad x\in \T^2,
  \end{equation}
where $I_2$ is the $2\times 2$ identity matrix.
\end{lemma}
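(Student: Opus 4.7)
The goal is to show that the seemingly $x$-dependent sum $\sum_k (\theta^N_k)^2 (\sigma_k\otimes\sigma_k)(x)$ is in fact constant in $x$ and a scalar multiple of $I_2$. There are two cancellations that make this work: one from the pair $\{k,-k\}$ (which kills the $x$-dependence) and one from the orbit $\{k,\,k^\perp,\,-k,\,-k^\perp\}$ (which makes the matrix scalar).

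First, by definition of $\sigma_k$ in \eqref{CONS},
\[
(\sigma_k\otimes\sigma_k)(x) \;=\; \frac{k^\perp\otimes k^\perp}{|k|^2}\, e_k(x)^2.
\]
Pair each $k\in\Z^2_+$ with $-k\in\Z^2_-$. The symmetry property \eqref{theta} gives $\theta^N_k=\theta^N_{-k}$, and $(-k)^\perp\otimes(-k)^\perp=k^\perp\otimes k^\perp$, while
\[
e_k(x)^2+e_{-k}(x)^2 \;=\; 2\cos^2(2\pi k\cdot x)+2\sin^2(2\pi k\cdot x)\;=\;2.
\]
Hence the left-hand side of \eqref{useful-identity} is independent of $x$ and equals
\[
\sum_{k\in\Z^2_0}(\theta^N_k)^2\,\frac{k^\perp\otimes k^\perp}{|k|^2}.
\]

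Next I would group terms by modulus. For each admissible $r>0$, let $A_r=\{k\in\Z^2_0:|k|=r\}$. By \eqref{theta}, the value $(\theta^N_k)^2$ is constant on $A_r$; call it $a_r$. The key observation is that $A_r$ is invariant under the rotation $k\mapsto k^\perp$ (since $|k^\perp|=|k|$), so
\[
\sum_{k\in A_r} k\otimes k \;=\; \sum_{k\in A_r} k^\perp\otimes k^\perp,
\]
and adding the two gives $\sum_{k\in A_r}(k\otimes k+k^\perp\otimes k^\perp)=|A_r|\,r^2 I_2$, which implies
\[
\sum_{k\in A_r} \frac{k^\perp\otimes k^\perp}{|k|^2} \;=\; \frac{|A_r|}{2}\,I_2.
\]
Therefore
\[
\sum_{k\in\Z^2_0}(\theta^N_k)^2\,\frac{k^\perp\otimes k^\perp}{|k|^2}
\;=\;\sum_{r} a_r\cdot\frac{|A_r|}{2}\,I_2
\;=\;\frac{I_2}{2}\sum_{k\in\Z^2_0}(\theta^N_k)^2
\;=\;\tfrac12\|\theta^N\|_{\ell^2}^2\,I_2,
\]
which is \eqref{useful-identity}.

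Neither step is really an obstacle: the $x$-independence is a short Pythagorean identity, and the scalar-matrix conclusion is a rigid-motion symmetry argument applied to each modulus-level set $A_r$. The only essential ingredient beyond trigonometry is the symmetry assumption \eqref{theta}, which is precisely what allows $(\theta^N_k)^2$ to be pulled out of the inner sum over $A_r$.
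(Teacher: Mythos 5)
Your proof is correct and follows essentially the same strategy as the paper: reduce to the $x$-independent sum $\sum_k (\theta^N_k)^2\,k^\perp\otimes k^\perp/|k|^2$ via the $\pm k$ pairing, then use the symmetry \eqref{theta} on each level set $\{|k|=r\}$ to see the matrix is $\tfrac12\|\theta^N\|_{\ell^2}^2 I_2$. The only (cosmetic) difference is that the paper invokes the sign-flip and coordinate-swap symmetries of the level set separately to kill the off-diagonal entry and equate the diagonal ones, whereas you package both into the single rotation $k\mapsto k^\perp$, which is a slightly tidier way to reach the same identity $k\otimes k+k^\perp\otimes k^\perp=|k|^2 I_2$.
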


\begin{proof}
We give the proof for the reader's convenience. For any $x,y\in \T^2$, using the definition of $\sigma_k$ and the fact that $\theta^N$ satisfies \eqref{theta}, we have
  $$\aligned
  Q_N(x,y)= &\ \sum_{k\in \Z^2_0} \big( \theta^N_k \big)^2 \sigma_k(x)\otimes \sigma_k(y)= \sum_{k\in \Z^2_0} \big( \theta^N_k \big)^2 \frac{k^\perp \otimes k^\perp}{|k|^2} e_k(x) e_k(y) \\
  =&\ 2\sum_{k\in \Z^2_+} \big( \theta^N_k \big)^2 \frac{k^\perp \otimes k^\perp}{|k|^2} \big[\cos(2\pi k\cdot x)\cos(2\pi k\cdot y) + \sin(2\pi k\cdot x)\sin (2\pi k\cdot y) \big] \\
  =&\ 2\sum_{k\in \Z^2_+} \big( \theta^N_k \big)^2 \frac{k^\perp \otimes k^\perp}{|k|^2} \cos(2\pi k\cdot (x-y)) =\sum_{k\in \Z^2_0} \big( \theta^N_k \big)^2 \frac{k^\perp \otimes k^\perp}{|k|^2} \cos(2\pi k\cdot (x-y)).
  \endaligned $$
Therefore, $Q_N(x,y)$ only depends on the displacement $x-y$, which, for simplicity of notation, will be denoted by $Q_N(x-y)$. In particular,
  $$Q_N(0) = \sum_{k\in \Z^2_0} \big( \theta^N_k \big)^2 \sigma_k(x)\otimes \sigma_k(x) = \sum_{k\in \Z^2_0} \big( \theta^N_k \big)^2 \frac{k^\perp \otimes k^\perp}{|k|^2} = \sum_{k\in \Z^2_0} \frac{\big( \theta^N_k \big)^2 }{|k|^2}
  \begin{pmatrix}
  k_2^2 &  -k_1 k_2 \\  -k_1 k_2 & k_1^2
  \end{pmatrix}.$$
First, we have
  $$Q_N^{1,2}(0) = - \sum_{k\in \Z^2_0} \frac{\big( \theta^N_k \big)^2 }{|k|^2} k_1 k_2 =0 $$
since, by \eqref{theta}, the sum of the four terms involving $(k_1, k_2),\, (-k_1, k_2),\, (k_1, -k_2),\, (-k_1, -k_2)$ cancel each other. Next, using again \eqref{theta},
  $$Q_N^{1,1}(0)= \sum_{k\in \Z^2_0} \frac{\big( \theta^N_k \big)^2 }{|k|^2} k_2^2 = \sum_{k\in \Z^2_0} \frac{\big( \theta^N_k \big)^2 }{|k|^2} k_1^2 = Q_N^{2,2}(0)$$
since the points $(k_1, k_2)$ and $(k_2, k_1)$ appear in pair. Therefore,
  $$ Q_N^{1,1}(0)= Q_N^{2,2}(0) = \frac12 \sum_{k\in \Z^2_0} \frac{\big( \theta^N_k \big)^2 }{|k|^2} \big( k_1^2 + k_2^2 \big) = \frac12 \sum_{k\in \Z^2_0} \big( \theta^N_k \big)^2 = \frac12 \big\| \theta^N \big\|_{\ell^2}^2.$$
This completes the proof.
\end{proof}

Hence, by \eqref{useful-identity} and the definition \eqref{eps-N} of $\eps_N$, we obtain
  $$\frac{\eps_N^2}2 \sum_{k\in \Z^2_0} \big( \theta^N_k \big)^2 {\rm Tr} \big[(\sigma_k \otimes \sigma_k) \nabla^2\phi \big]\big( X^{N,i}_t \big) = \nu \Delta \phi\big( X^{N,i}_t \big) .$$
As a result, for $1\leq i\leq N$,
  $$\aligned
  \d \phi\big( X^{N,i}_t \big) &= \frac1N \sum_{j\neq i}K \big(X^{N,i}_t -X^{N,j}_t\big) \cdot \nabla\phi \big( X^{N,i}_t \big) \,\d t + \nu \Delta \phi\big( X^{N,i}_t \big)\,\d t\\
  &\quad + \eps_N \sum_{k\in \Z^2_0} \theta^N_k (\sigma_k\cdot \nabla\phi ) \big( X^{N,i}_t \big)\,\d W^k_t.
  \endaligned $$
Denoting by $\big\< S^N_t, \phi \big\>= \frac1{N} \sum_{i=1}^N \phi\big( X^{N,i}_t \big)$, then we have
  $$\aligned
  \d \big\< S^N_t, \phi \big\> &= \frac1{N^2} \sum_{1\leq i\neq j\leq N} K \big(X^{N,i}_t -X^{N,j}_t\big) \cdot \nabla\phi \big( X^{N,i}_t \big) \,\d t + \nu \big\< S^N_t, \Delta\phi \big\>\,\d t \\
  &\quad + \eps_N \sum_{k\in \Z^2_0} \theta^N_k \big\< S^N_t, \sigma_k\cdot \nabla\phi \big\>\,\d W^k_t.
  \endaligned $$
Using the fact that $K(-x)=-K(x)$ for all $ x\in \T^2\setminus \{0\}$, we can rewrite the first term on the right hand side as $\big\< S^N_t\otimes S^N_t, H_\phi \big\>$, where
  $$H_\phi (x,y)= \frac12 K(x-y)\cdot (\nabla\phi(x)- \nabla\phi(y))$$
is a symmetric function on $\T^2\times \T^2$, with the convention that $H_\phi (x,x)=0$. We remark that $H_\phi $ is smooth off the diagonal and bounded by $C\|\nabla^2\phi \|_\infty$ for some $C>0$ independent of $\phi$. Therefore, we get the equation for the empirical measure:
  \begin{equation}\label{empirical-measure}
  \aligned
  \d \big\< S^N_t, \phi \big\>&= \big\< S^N_t\otimes S^N_t, H_\phi \big\>\,\d t + \nu \big\< S^N_t, \Delta\phi \big\>\,\d t + \eps_N \sum_{k\in \Z^2_0} \theta^N_k \big\< S^N_t, \sigma_k\cdot \nabla\phi \big\>\,\d W^k_t.
  \endaligned
  \end{equation}

\subsection{Entropy for density functions}\label{subsec-entropy}

The relative entropy $h_N(F)$ of probability density functions $F$ on $\T^{2N} :=(\T^2)^N$ is defined as
  $$h_N(F) = \frac1N \int_{\T^{2N}} F(X)\log F(X) \,\d X,$$
where $\d X = \d x_1\ldots \d x_N$ is the Lebesgue measure on $\T^{2N}$. The simple inequality $s\log s \geq s-1 \ (s\geq 0)$ implies that $h_N(F)$ is always nonnegative. As in \cite{FourHauMis}, we add the factor $1/N$ so that if $F(x_1, \ldots, x_N) = f(x_1) \ldots f(x_N)$ for some probability density $f$ on $\T^2$ with finite entropy, then one has $h_N(F) = h_1(f)$. The functional $h_N$ enjoys the following important property: if $F$ is exchangeable (i.e. $F$ is invariant under permutations of its variables) and $F^{(2)}$ is the marginal distribution of $F$ on $\T^4$, then
  \begin{equation}\label{entropy}
  h_N(F) \geq \frac{N-1}N h_2\big( F^{(2)}\big)\quad \mbox{for all } N\geq 2.
  \end{equation}

Let $F^{N}_t$ be the density function of the law on $\T^{2N}$ of the particles $\big(X^{N,1}_t, \ldots, X^{N,N}_t\big)$ associated to \eqref{particle-systems}; then $F^{N}_0(X) = f_0(x_1) \ldots f_0(x_N)$. We want to prove an estimate on $h_N\big(F^{N}_t \big)$, for which we need to introduce some notations. Define the dispersion vector fields on $\T^{2N}$:
  $$A_k(X)= (\sigma_k(x_1), \ldots, \sigma_k(x_N)), \quad X= (x_1, \ldots, x_N)\in \T^{2N},\,  k\in \Z^2_0,$$
and the drift field $A_0$ via
  $$A_0^i(X) = \frac1N \sum_{j=1,j\neq i}^N K(x_i- x_j), \quad X= (x_1, \ldots, x_N) \notin D_N, \ 1\leq i \leq N,$$
where $D_N= \{X= (x_1, \ldots, x_N)\in \T^{2N}: \exists\, i\neq j \mbox{ such that } x_i=x_j\}$ is the generalized diagonal of $\T^{2N}$. It is clear that all the vector fields $A_k\, (k\in \Z^2_0)$ are divergence free, so is $A_0$ on $D_N^c$. Denoting by $X^N_t= \big(X^{N,1}_t,\ldots, X^{N,N}_t \big),\, t\geq 0$; then the system \eqref{particle-systems} of SDEs can be simply written as
  $$\d X^N_t = A_0\big( X^N_t\big) \,\d t + \eps_N \sum_{k\in \Z^2_0} \theta^N_k A_k \big( X^N_t\big)\,\d W^k_t.$$
We remark that the equation can also be written in the Stratonovich form since $A_k \cdot \nabla_N A_k=0$ for all $k\in \Z^2_0$, where $\nabla_N= (\nabla_{x_1},\ldots, \nabla_{x_N})$ is the gradient operator on $\T^{2N}$. The associated infinitesimal generator has the form
  $$\L \Phi(X) = \frac{\eps_N^2}2 \sum_{k\in \Z^2_0} \big(\theta^N_k \big)^2 \big\<A_k, \nabla_N \<A_k, \nabla_N \Phi\>_{\R^{2N}} \big\>_{\R^{2N}} + \<A_0, \nabla_N \Phi\>_{\R^{2N}}, \quad \Phi\in C^2 \big( \T^{2N} \big). $$

\begin{lemma}\label{lem-entropy}
For all $t >0$,
  $$h_N\big( F^{N}_t \big) \leq h_N\big( F^{N}_0 \big)= h_1(f_0).$$
\end{lemma}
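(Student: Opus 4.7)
The plan is to show that $t \mapsto h_N(F^N_t)$ is nonincreasing, and then integrate. Writing the generator derived above in its adjoint form, $F^N_t$ satisfies the Fokker--Planck equation
\begin{equation*}
\partial_t F^N_t = -\,\mathrm{div}_N\bigl( A_0 F^N_t\bigr) + \frac{\eps_N^2}{2} \sum_{k \in \Z^2_0} \bigl(\theta^N_k\bigr)^2 \mathrm{div}_N\bigl( A_k \langle A_k, \nabla_N F^N_t\rangle \bigr),
\end{equation*}
where I have used that every $A_k$, and $A_0$ on $D_N^c$, is divergence free so that the adjoint keeps this simple transport/double-divergence form. The key algebraic observation is that the full diffusion matrix on $\T^{2N}$ is
\begin{equation*}
\Sigma_N(X) \;=\; \eps_N^2 \sum_{k \in \Z^2_0} \bigl(\theta^N_k\bigr)^2 \, A_k(X) \otimes A_k(X),
\end{equation*}
which is manifestly symmetric and positive semidefinite, and whose diagonal blocks reduce, by Lemma \ref{2-lem}, to the scalar $2\nu I_2$ (producing the Laplacian in each variable), while the off-diagonal blocks are $\eps_N^2 Q_N(x_i-x_j)$.

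The main computation is a formal differentiation: using $\frac{\d}{\d s}(s\log s) = \log s + 1$,
\begin{equation*}
\frac{\d}{\d t} h_N\bigl(F^N_t\bigr) \;=\; \frac{1}{N} \int_{\T^{2N}} (\log F^N_t + 1)\, \partial_t F^N_t \, \d X.
\end{equation*}
For the transport part, an integration by parts yields $\int \log F^N_t\, \mathrm{div}_N(A_0 F^N_t)\, \d X = -\int A_0 \cdot \nabla_N F^N_t\,\d X = \int F^N_t\,\mathrm{div}_N A_0\, \d X$, which vanishes by the divergence-free property of $A_0$ on $D_N^c$ (the contribution of $D_N$ is controlled since $D_N$ has Lebesgue measure zero and $A_0$ has a locally integrable logarithmic-gradient structure). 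For the diffusion part, two integrations by parts give the Fisher-information-like identity
\begin{equation*}
\frac{\d}{\d t} h_N\bigl( F^N_t \bigr) \;=\; -\,\frac{1}{2N} \int_{\T^{2N}} \frac{1}{F^N_t}\bigl\langle \nabla_N F^N_t, \Sigma_N(X)\, \nabla_N F^N_t \bigr\rangle_{\R^{2N}} \d X \;\le\; 0,
\end{equation*}
where nonpositivity is immediate from the rank-one decomposition of $\Sigma_N$, which turns the integrand into $\eps_N^2 \sum_k (\theta^N_k)^2 (A_k \cdot \nabla_N F^N_t)^2/F^N_t$. Integrating from $0$ to $t$ and noting that $F^N_0$ is a tensor product of $f_0$ gives $h_N(F^N_t) \le h_N(F^N_0) = h_1(f_0)$.

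The delicate point is justifying the integrations by parts in spite of the singularity of $A_0$ on the diagonal $D_N$ and possible lack of regularity of $F^N_t$. I would handle this by a standard approximation: replace $K$ by a smooth truncation $K_\delta$ (which gives a smooth, divergence-free drift $A_0^\delta$ and a smooth density $F^{N,\delta}_t$), carry out the computation above rigorously for $F^{N,\delta}_t$ to obtain $h_N(F^{N,\delta}_t) \le h_1(f_0)$ uniformly in $\delta$, and then pass to the limit $\delta \to 0$ using lower semicontinuity of the relative entropy with respect to weak convergence of probability measures. The use of global well-posedness recalled after \eqref{eps-N} ensures the laws themselves converge as $\delta\to 0$, so no information is lost in the limit; this approximation step is where I expect the only real technical work to lie.
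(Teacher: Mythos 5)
Your proposal is correct and follows essentially the same route as the paper: differentiate $h_N(F^N_t)$ along the Fokker--Planck equation, kill the transport term using that $A_0$ and the $A_k$ are divergence free, and integrate by parts twice in the diffusion term to obtain the nonpositive Fisher-information-type quantity $-\frac{\eps_N^2}{2N}\sum_k (\theta^N_k)^2 \int \langle A_k,\nabla_N F^N_t\rangle^2 / F^N_t \,\d X$. The paper likewise treats the computation as formal and defers the rigorous justification to an approximation of $f_0$ and $K$ by smooth objects, exactly as you outline.
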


\begin{proof}
The proof below is a little formal, but it can be made rigorous by approximating the initial density $f_0$ and the kernel $K:\T^2 \to \R^2$ with smooth objects, cf. \cite[Sect. 4.2]{FlaLuo-1}. The density function $F^N_t$ satisfies the Fokker--Planck equation
  $$\partial_t F^N_t = \L^\ast F^N_t = \frac{\eps_N^2}2 \sum_{k\in \Z^2_0} \big(\theta^N_k \big)^2 \big\<A_k, \nabla_N \big\<A_k, \nabla_N F^N_t \big\>_{\R^{2N}} \big\>_{\R^{2N}} - \big\<A_0, \nabla_N F^N_t \big\>_{\R^{2N}}.$$
Therefore,
  $$\aligned
  \partial_t \big( F^N_t\log F^N_t \big)=&\ \big(1+ \log F^N_t \big)\partial_t F^N_t \\
  =&\ \frac{\eps_N^2}2 \sum_{k\in \Z^2_0} \big(\theta^N_k \big)^2 \big(1+ \log F^N_t \big) \big\<A_k, \nabla_N \big\<A_k, \nabla_N F^N_t \big\>_{\R^{2N}} \big\>_{\R^{2N}} \\
  &\, - \big(1+ \log F^N_t \big) \big\<A_0, \nabla_N F^N_t \big\>_{\R^{2N}}.
  \endaligned $$
Note that $\big(1+ \log F^N_t \big) \big\<A_0, \nabla_N F^N_t \big\>_{\R^{2N}} = \big\<A_0, \nabla_N \big(F^N_t \log F^N_t \big) \big\>_{\R^{2N}}$ and that all the vector fields $A_k$ and $A_0$ are divergence free. Integrating both sides of the above equation on $\T^{2N}$ and applying integration by parts yield
  \begin{equation}\label{lem-entropy.1}
  \aligned
  \frac{\d}{\d t} h_N\big( F^N_t \big) &= \frac1N \int_{\T^{2N}} \big(1+ \log F^N_t \big)\partial_t F^N_t \,\d X \\
  &= - \frac{\eps_N^2}{2N} \sum_{k\in \Z^2_0} \big(\theta^N_k \big)^2 \int_{\T^{2N}} \frac{\big\<A_k, \nabla_N F^N_t \big\>_{\R^{2N}}^2}{F^N_t}\,\d X,
  \endaligned
  \end{equation}
which immediately gives us the desired result.
\end{proof}

\begin{remark}
Unlike in \cite{FourHauMis}, we are unable to derive, from the identity \eqref{lem-entropy.1}, estimate on the Fisher information, which was used in \cite[Lemma 3.3]{FourHauMis} to show that particles are not too close to each other.
\end{remark}

\section{Scaling limit of random point vortices} \label{sec-scaling-limit}

Recall the empirical measures $\{S^N_t: t\in [0,T]\}_{N\geq 1}$ defined in Section 1.  For any $\phi\in C^\infty(\T^2)$ it is obvious that $|\<S^N_t,\phi\>| \leq \|\phi \|_\infty$, thus, using the definition of Sobolev norm in $H^{s}(\T^2)$, one can easily show that, for any $s>1$,
  \begin{equation}\label{bound-1}
  \sup_{0\leq t\leq T} \big\|S^N_t \big\|_{H^{-s}} \leq C_s <\infty \quad \P \mbox{-a.s.}
  \end{equation}
In particular, $S^N_\cdot$ has trajectories in $L^\infty\big(0,T; H^{-s}(\T^2)\big),\, s>1$.

Let $\eta_N, \, N\in \N$ be the laws of  $S^N_\cdot$; we want to show that the family $\{\eta_N\}_{N\geq 1}$ is tight on $C\big([0,T]; H^{-s}(\T^2)\big)$ for any $s>1$. By \eqref{bound-1} and Simon's compactness result (cf. \cite[p. 90, Corollary 9]{Simon}), it is sufficient to show that $\{S^N_\cdot\}_{N\geq 1}$ is bounded in probability in $W^{1/3,4}\big(0,T; H^{-\beta}(\T^2)\big)$ for some $\beta>5$. This is an immediate consequence of the fact below:
  \begin{equation}\label{bound-2}
  \sup_{N\geq 1} \E\int_0^T \!\int_0^T \frac{\|S^N_t- S^N_s\|_{H^{-\beta}}^4}{|t-s|^{7/3}} \,\d t\d s <+\infty.
  \end{equation}

\begin{lemma}\label{lem-time-difference}
There exists a constant $C= C(T,\nu)>0$ such that for any $k\in \Z^2$, it holds
  $$\E\Big(\big\<S^N_t- S^N_s, e_k\big\>^4 \Big) \leq C|k|^8 (t-s)^2, \quad 0\leq s <t\leq T. $$
\end{lemma}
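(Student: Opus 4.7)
The plan is to test the empirical measure equation \eqref{empirical-measure} against $\phi = e_k$ and estimate each of the three resulting contributions (nonlinear drift, Laplacian, martingale) separately, then combine by the elementary inequality $(a+b+c)^4 \le 27(a^4 + b^4 + c^4)$.

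First I would write
\begin{equation*}
\<S^N_t - S^N_s, e_k\> = I_1 + I_2 + M_{s,t},
\end{equation*}
where $I_1 = \int_s^t \<S^N_r \otimes S^N_r, H_{e_k}\>\,\d r$, $I_2 = \nu \int_s^t \<S^N_r, \Delta e_k\>\,\d r$, and $M_{s,t}$ is the stochastic integral. For the two deterministic terms, I use pathwise sup-norm bounds. The key input is the observation already recorded after the definition of $H_\phi$: namely $\|H_{e_k}\|_\infty \le C\|\nabla^2 e_k\|_\infty$, where cancellation via the symmetrization $\frac12(\nabla e_k(x) - \nabla e_k(y))$ absorbs the singularity $|K(x-y)| \sim |x-y|^{-1}$ near the diagonal. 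Since $\|\nabla^2 e_k\|_\infty \le C|k|^2$ and $\|\Delta e_k\|_\infty \le C|k|^2$, I get $|I_1| + |I_2| \le C(1+\nu)|k|^2 (t-s)$, hence $\E(|I_1|+|I_2|)^4 \le C|k|^8(t-s)^4 \le CT^2|k|^8(t-s)^2$.

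For the martingale piece, by the Burkholder--Davis--Gundy inequality,
\begin{equation*}
\E|M_{s,t}|^4 \le C\, \E\!\left( \eps_N^2 \int_s^t \sum_{j \in \Z^2_0} \big(\theta^N_j\big)^2 \big\<S^N_r, \sigma_j \cdot \nabla e_k\big\>^2\,\d r \right)^{\!2}.
\end{equation*}
Because $|\sigma_j| \le \sqrt{2}$ and $|\nabla e_k| \le C|k|$, the integrand inside the inner expectation is pointwise bounded by $C|k|^2$, so $\big\<S^N_r, \sigma_j \cdot \nabla e_k\big\>^2 \le C|k|^2$. Then I pull this bound out and use the scaling identity $\eps_N^2 \|\theta^N\|_{\ell^2}^2 = 4\nu$ from \eqref{eps-N}, obtaining
\begin{equation*}
\E|M_{s,t}|^4 \le C |k|^4 \big(\eps_N^2 \|\theta^N\|_{\ell^2}^2\big)^2 (t-s)^2 = C\nu^2 |k|^4 (t-s)^2 \le C|k|^8 (t-s)^2.
\end{equation*}

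Summing the three contributions yields the claimed estimate with $C = C(T,\nu)$. I do not anticipate any real obstacle: the only slightly delicate point is remembering that $H_\phi$ is controlled by $\|\nabla^2\phi\|_\infty$ rather than by a norm of $K$ times $\|\nabla\phi\|_\infty$, which is what makes the drift estimate polynomial in $|k|$ and uniform in $N$. Note that the upper bound $C|k|^8(t-s)^2$ is extremely lossy in $|k|$ (the martingale only gives $|k|^4$), but this is of no importance: in the application to \eqref{bound-2} we test against $H^{-\beta}$ with $\beta > 5$, so any polynomial factor in $|k|$ is absorbed.
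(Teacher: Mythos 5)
Your proof is correct and follows essentially the same route as the paper's: decompose via \eqref{empirical-measure}, bound the drift and Laplacian terms pathwise by $C|k|^2(t-s)$ using $\|H_{e_k}\|_\infty\le C\|\nabla^2 e_k\|_\infty$, and control the martingale by Burkholder--Davis--Gundy together with the scaling $\eps_N^2\|\theta^N\|_{\ell^2}^2=4\nu$. The only (immaterial) difference is that in the quadratic variation you use the crude pointwise bound $|\sigma_j|\le\sqrt2$ before summing over $j$, whereas the paper first applies Jensen's inequality with respect to $S^N_r$ and then the exact identity \eqref{useful-identity}; both yield the same $C\nu^2|k|^4(t-s)^2$ up to constants.
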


\begin{proof}
By \eqref{empirical-measure}, we have
  $$ \aligned
  \big\< S^N_t -S^N_s, e_k \big\> &= \int_s^t \big\< S^N_r\otimes S^N_r, H_{e_k} \big\>\,\d r + \nu \int_s^t \big\< S^N_r, \Delta e_k \big\>\,\d r \\
  & \quad + \eps_N \sum_{l} \theta^N_l \int_s^t \big\< S^N_r, \sigma_l\cdot \nabla e_k \big\>\,\d W^l_r.
  \endaligned $$
First, by the Burkholder-Davis-Gundy inequality,
  $$\E \bigg[\Big( \eps_N \sum_{l} \theta^N_l \int_s^t \big\< S^N_r, \sigma_l\cdot \nabla e_k \big\>\,\d W^l_r \Big)^4 \bigg] \leq C \eps_N ^4\, \E \bigg[\Big( \sum_{l} \big(\theta^N_l \big)^2 \int_s^t \big\< S^N_r, \sigma_l\cdot \nabla e_k \big\>^2\,\d r \Big)^2 \bigg]. $$
Using Cauchy's inequality, we obtain
  $$\aligned
  \sum_{l} \big(\theta^N_l \big)^2 \big\< S^N_r, \sigma_l\cdot \nabla e_k \big\>^2 &\leq \sum_{l} \big(\theta^N_l \big)^2 \frac1N \sum_{i=1}^N \big[(\sigma_l\cdot \nabla e_k)\big(X^{N,i}_t\big)\big]^2 \\
  &= \frac1N \sum_{i=1}^N \frac12 \|\theta^N\|_{\ell^2}^2 \big| \nabla e_k\big(X^{N,i}_t\big) \big|^2 \leq 4\pi^2 \|\theta^N\|_{\ell^2}^2 |k|^2,
  \endaligned $$
where in the second step we have used \eqref{useful-identity}. By the definition of $\eps_N$, we arrive at
  $$\E \bigg[\Big( \eps_N \sum_{l} \theta^N_l \int_s^t \big\< S^N_r, \sigma_l\cdot \nabla e_k \big\>\,\d W^l_r \Big)^4 \bigg] \leq C \nu^2 |k|^4 (t-s)^2. $$
Combining this estimate with the following facts
  $$\big|\big\< S^N_r\otimes S^N_r, H_{e_k} \big\> \big| \leq \|H_{e_k} \|_\infty \leq C|k|^2, \quad \big|\big\< S^N_r, \Delta e_k \big\> \big| \leq \|\Delta e_k \|_\infty \leq 4\pi^2 |k|^2, $$
we can easily prove the desired estimate.
\end{proof}

Now by Cauchy's inequality and Lemma \ref{lem-time-difference},
  $$\aligned \E \big(\|S^N_t- S^N_s\|_{H^{-\beta}}^4 \big) &= \E\Bigg[ \bigg(\sum_k \frac{\< S^N_t -S^N_s, e_k \>^2}{(1+|k|^2)^\beta} \bigg)^2 \Bigg] \\
  &\leq \bigg(\sum_k \frac{1}{(1+|k|^2)^\beta} \bigg) \bigg(\sum_k \frac{\E \big(\< S^N_t -S^N_s, e_k \>^4 \big)}{(1+|k|^2)^\beta} \bigg)\\
  &\leq C_\beta \sum_k \frac{C|k|^8 (t-s)^2 }{(1+|k|^2)^\beta} \leq C'_\beta (t-s)^2 ,
  \endaligned $$
where the last inequality is due to $\beta>5$. From this result we immediately get \eqref{bound-2}.

Summarizing the above discussions, we deduce that $\{\eta_N\}_{N\geq 1}$ is tight on $C\big([0,T]; H^{-s}(\T^2)\big)$ for any $s>1$. Therefore, Prohorov's theorem (see \cite[p.59, Theorem 5.1]{Billingsley}) implies the existence of a subsequence $\{\eta_{N_i}\}_{i\geq 1}$ converging weakly to some probability measure $\eta$ supported on $C\big([0,T]; H^{-s}(\T^2)\big)$. By Skorokhod's representation theorem (see \cite[p.70, Theorem 6.7]{Billingsley}), there exist a new probability space $\big(\tilde\Omega, \tilde{\mathcal F},\tilde \P \big)$, a sequence of random variables $\big\{\tilde S^{N_i} \big\}_{i\geq 1}$ and a random variable $\tilde\xi$ defined on $\big( \tilde\Omega, \tilde{\mathcal F},\tilde \P \big)$, such that
\begin{itemize}
\item[(i)] $\tilde\xi $ has law $\eta$ and $\tilde S^{N_i} $ has law $\eta_{N_i}\, (i\geq 1)$;
\item[(ii)] $\tilde\P$-a.s., $\tilde S^{N_i}_\cdot$ converges to $\tilde\xi_\cdot $ in the topology of $C\big([0,T]; H^{-s}(\T^2)\big)$.
\end{itemize}

\begin{remark}
Notice that, for $\tilde\P$-a.s. $\tilde\omega\in \tilde\Omega$,  $\big\{\tilde S^{N_i}_\cdot (\tilde\omega) \big\}_{i\geq 1}$ is a sequence of functions with values in the space $\mathcal P(\T^2)$ of probability measures; moreover, the above arguments show that they are equi-continuous in time in some negative Sobolev space. Therefore, up to a further subsequence, $\tilde S^{N_i}_\cdot(\tilde\omega)$ converges weakly-$\ast$ in the space of functions with values in $\mathcal P(\T^2)$; see the third paragraph in \cite[p.915]{Schochet96} for similar remarks. As a result, for $\tilde\P$-a.s. $\tilde\omega\in \tilde\Omega$ and a.e. $t\in (0,T)$, one has $\tilde\xi_t(\tilde \omega) \in \mathcal P(\T^2)$.
\end{remark}

By assertion (i), for any $i\geq 1$, $\tilde S^{N_i}_\cdot$ fulfills an equation as \eqref{empirical-measure}; therefore, for any $\phi\in C^\infty(\T^2)$, for any $t\in [0,T]$,
  \begin{equation}\label{eq-new}
  \aligned
  \big\< \tilde S^{N_i}_t, \phi \big\>&= \big\< \tilde S^{N_i}_0, \phi \big\> + \int_0^t \big\< \tilde S^{N_i}_s\otimes \tilde S^{N_i}_s, H_\phi \big\>\,\d s + \nu \int_0^t \big\< \tilde S^{N_i}_s, \Delta\phi \big\>\,\d s \\
  &\quad + \eps_{N_i} \sum_{k\in \Z^2_0} \theta^{N_i}_k \int_0^t \big\< \tilde S^{N_i}_s, \sigma_k\cdot \nabla\phi \big\>\,\d \tilde W^k_s,
  \endaligned
  \end{equation}
where $\big\{\tilde W^k \big\}_{k\in \Z^2_0}$ is a family of independent standard Brownian motions on $\big( \tilde\Omega, \tilde{\mathcal F},\tilde \P \big)$. It remains to let $i\to \infty$ in the above equation and prove that the limit $\tilde\xi$ fulfills the weak vorticity form of the deterministic 2D Navier-Stokes equation. For this purpose, it is sufficient to show the convergence of the nonlinear part and the martingale part. In the following, for simplicity of notations, we omit the tilde over $\tilde\xi,\, \tilde S^{N_i}$ and $\tilde W^k$, and write $N$ instead of $N_i$.

We first deal with the martingale part:
  \begin{equation}\label{martingale}
  M^{\phi, N}_t :=\eps_N \sum_{k\in \Z^2_0} \theta^N_k \int_0^t \big\< S^N_s, \sigma_k\cdot \nabla\phi \big\>\,\d W^k_s.
  \end{equation}

\begin{proposition}\label{key-prop}
Under the condition \eqref{main-thm.1} it holds that
  $$ \lim_{N\to \infty} \E\bigg[\sup_{t\in [0,T]} \big|M^{\phi, N}_t\big|^2 \bigg] =0.$$
\end{proposition}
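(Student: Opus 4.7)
The plan is to apply Doob's maximal inequality and the It\^o isometry to rewrite $\E\sup_t|M^{\phi,N}_t|^2$ as a space--time integral of the covariance kernel $Q_N$ against the two-particle marginal density $F^{N,(2)}_s$, and then to combine the entropy bound from Lemma~\ref{lem-entropy} with the pointwise decay hypothesis \eqref{main-thm.1} via the entropy-duality inequality.

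\textbf{Reduction.} By Doob and the It\^o isometry,
\[
\E\Big[\sup_{t\in[0,T]}\big|M^{\phi,N}_t\big|^2\Big] \leq 4\eps_N^2\sum_{k\in\Z^2_0}(\theta^N_k)^2\,\E\int_0^T\big\<S^N_s,\sigma_k\cdot\nabla\phi\big\>^2\,\d s.
\]
Expanding the square and invoking \eqref{covariance}, the right-hand side equals $\tfrac{4\eps_N^2}{N^2}\E\int_0^T\sum_{i,j}\nabla\phi(X^{N,i}_s)\cdot Q_N(X^{N,i}_s-X^{N,j}_s)\nabla\phi(X^{N,j}_s)\,\d s$. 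Lemma~\ref{2-lem} gives $Q_N(0)=\tfrac12\|\theta^N\|_{\ell^2}^2 I_2$, which combined with $\eps_N^2\|\theta^N\|_{\ell^2}^2=4\nu$ bounds the diagonal ($i=j$) contribution by $O(1/N)$ and yields the uniform pointwise bound $\eps_N^2|Q_N(x)|\leq 4\nu$ on $\T^2$. By exchangeability, the off-diagonal part reduces to proving
\[
I_N(s) := \eps_N^2\!\int_{\T^2\times\T^2}\!\!\big|\nabla\phi(x)\cdot Q_N(x-y)\nabla\phi(y)\big|\,F^{N,(2)}_s(x,y)\,\d x\,\d y \longrightarrow 0
\]
uniformly in $s\in[0,T]$, where $F^{N,(2)}_s$ is the two-marginal density of \eqref{particle-systems}.

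\textbf{Entropy duality and splitting.} For this I use the classical variational inequality
\[
\int g\,F\,\d x\,\d y \leq \frac{1}{\lambda}\Big(\textstyle\int F\log F\,\d x\,\d y + \log\int e^{\lambda g}\,\d x\,\d y\Big),\quad \lambda>0,
\]
applied to $F=F^{N,(2)}_s$; by Lemma~\ref{lem-entropy} and \eqref{entropy}, $\int F^{N,(2)}_s\log F^{N,(2)}_s$ is bounded by a constant $C_0$ independent of $N$ and $s$. Fix a small $\delta>0$ and split $I_N(s)$ along $\{|x-y|\leq\delta\}$. On the far region $\{|x-y|>\delta\}$, I take $g = \eps_N^2|\nabla\phi(x)\cdot Q_N(x-y)\nabla\phi(y)|\mathbf{1}_{|x-y|>\delta}$: the pointwise decay \eqref{main-thm.1} together with the uniform bound from the reduction step lets me apply dominated convergence to conclude $\int e^{\lambda g}\,\d x\,\d y\to |\{|x-y|>\delta\}|\leq 1$, so the far-region contribution is at most $C_0/\lambda$ in the limit, for every $\lambda>0$. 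On the near region $\{|x-y|\leq\delta\}$, I majorize the integrand by $4\nu\|\nabla\phi\|_\infty^2$ and apply the variational inequality to $g=\mathbf{1}_{|x-y|\leq\delta}$ with the optimized choice $\lambda = -\log|\{|x-y|\leq\delta\}|$; a direct computation yields $\int e^{\lambda g}\,\d x\,\d y\leq 2$, hence the near-region contribution is bounded by $(C_0+\log 2)/|\log|\{|x-y|\leq\delta\}||$, uniformly in $N$ and $s$. Sending $N\to\infty$, then $\lambda\to\infty$, then $\delta\to 0$ gives $\sup_{s\in[0,T]}I_N(s)\to 0$, and dominated convergence in $s$ closes the argument.

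\textbf{Main obstacle.} The delicate point is that $\eps_N^2 Q_N$ does \emph{not} shrink uniformly: its value $2\nu$ at the origin is fixed, while \eqref{main-thm.1} provides only pointwise decay off the diagonal with no quantitative rate. The entropy bound of Lemma~\ref{lem-entropy} supplies precisely the anti-concentration needed to keep the two-marginal $F^{N,(2)}_s$ from piling up on the diagonal fast enough to register this residual Dirac-like mass; calibrating these two competing effects through the choice $\lambda\sim -\log|\{|x-y|\leq\delta\}|$ in the entropy-duality inequality is the heart of the argument.
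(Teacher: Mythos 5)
Your proof is correct, and it shares the paper's skeleton: Doob plus the It\^o isometry, the diagonal/off-diagonal split with the $O(1/N)$ diagonal bound via Lemma \ref{2-lem}, exchangeability to pass to the two-particle marginal $F^{N,2}_s$ and the covariance $Q_N$ from \eqref{covariance}, the uniform entropy bound from Lemma \ref{lem-entropy} together with \eqref{entropy}, and dominated convergence driven by \eqref{main-thm.1} against the uniform bound on $\eps_N^2 |Q_N|$. Where you genuinely diverge is in how the off-diagonal term is decomposed. The paper splits according to the level sets of the \emph{density}, $\{F^{N,2}_t\leq M\}$ versus $\{F^{N,2}_t> M\}$: on the first piece it bounds $F^{N,2}_t$ by $M$ and lets \eqref{main-thm.1} kill $\eps_N^2\int|Q_N|$; on the second it uses the elementary inequality $F\,\mathbf{1}_{\{F>M\}}\leq F\log F/\log M$ and the entropy bound, then sends $N\to\infty$ followed by $M\to\infty$. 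You instead split \emph{spatially} around the diagonal $\{|x-y|\leq\delta\}$ and apply the Donsker--Varadhan/Young variational inequality on each piece with a calibrated $\lambda$ --- the very inequality the paper deploys only later, in the proof of Lemma \ref{lem-Hamiltonian}. The two routes are equivalent in strength (both are purely qualitative, with no rate), and both isolate the same obstruction, which you diagnose exactly as the paper does: $\eps_N^2 Q_N(0)=2\nu I_2$ does not vanish, so the entropy bound must prevent $F^{N,2}_s$ from concentrating on the diagonal. Your version uses slightly heavier machinery but makes the anti-concentration mechanism more transparent and unifies this proof with that of Lemma \ref{lem-Hamiltonian}; the paper's level-set splitting gets by with only the pointwise inequality behind the entropy. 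Two harmless cosmetic points: your uniform bound $\eps_N^2|Q_N|\leq 4\nu$ is a factor $2$ cruder than the paper's $2\nu$ (which exploits $|Q_N(x)|\leq |Q_N(0)|=\tfrac12\|\theta^N\|_{\ell^2}^2$), and in the far region $\int e^{\lambda g}$ tends to the full measure $1$ of $\T^2\times\T^2$ rather than to $|\{|x-y|>\delta\}|$, but since $\log 1=0$ the conclusion is unaffected.
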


\begin{proof}
We have
  $$\E\bigg[\sup_{t\in [0,T]} \big|M^{\phi, N}_t\big|^2 \bigg] \leq C \E\Big[\big|M^{\phi, N}_T\big|^2 \Big] = C \eps_N^2 \sum_{k\in \Z^2_0} \big(\theta^N_k \big)^2 \, \E \int_0^T \big\< S^N_t, \sigma_k\cdot \nabla\phi \big\>^2 \,\d t.$$
Noticing that
  $$\big\< S^N_t, \sigma_k\cdot \nabla\phi \big\>^2 = \frac1{N^2} \sum_{i,j=1}^N (\sigma_k\cdot \nabla\phi)\big( X^{N,i}_t \big) \, (\sigma_k\cdot \nabla\phi)\big( X^{N,j}_t \big),$$
the right hand side of the above inequality can be decomposed as the sum of the following two terms:
  $$\aligned
  I_1 & = C \frac{\eps_N^2}{N^2} \sum_{i=1}^N \sum_{k\in \Z^2_0} \big(\theta^N_k \big)^2 \, \E \int_0^T \big[ (\sigma_k\cdot \nabla\phi)\big( X^{N,i}_t \big) \big]^2 \,\d t, \\
  I_2&= C \frac{\eps_N^2}{N^2} \sum_{1\leq i\neq j\leq N} \sum_{k\in \Z^2_0} \big(\theta^N_k \big)^2 \, \E \int_0^T (\sigma_k\cdot \nabla\phi)\big( X^{N,i}_t \big) \, (\sigma_k\cdot \nabla\phi)\big( X^{N,j}_t \big) \,\d t.
  \endaligned$$
It follows from \eqref{useful-identity} and \eqref{eps-N} that
  \begin{equation}\label{key-prop.1}
  |I_1| = C \frac{\eps_N^2}{N^2} \sum_{i=1}^N \frac12 \big\| \theta^N \big\|_{\ell^2}^2 \, \E \int_0^T \big| \nabla\phi \big( X^{N,i}_t \big) \big|^2 \,\d t \leq \frac {CT\nu}N \|\nabla\phi\|_\infty^2 \to 0
  \end{equation}
as $N\to \infty$. Here $\|\nabla\phi\|_\infty$ is the supremum norm of $\nabla\phi(x)$ on $\T^2$.

We now turn to the second term $I_2$. By the exchangeability,
  $$\aligned
  I_2 &= C\eps_N^2 \frac{N-1}{N} \sum_{k\in \Z^2_0} \big(\theta^N_k \big)^2 \, \E \int_0^T (\sigma_k\cdot \nabla\phi)\big( X^{N,1}_t \big) \, (\sigma_k\cdot \nabla\phi)\big( X^{N,2}_t \big) \,\d t\\
  &= C\eps_N^2 \frac{N-1}{N} \sum_{k\in \Z^2_0} \big(\theta^N_k \big)^2 \int_0^T\!\!\int_{\T^4} (\sigma_k\cdot \nabla\phi)( x_1) \, (\sigma_k\cdot \nabla\phi)( x_2) F^{N,2}_t(x_1, x_2)\,\d x_1\d x_2\d t,
  \endaligned$$
where $F^{N,2}_t$ is the joint density function of $\big( X^{N,1}_t, X^{N,2}_t \big)$. We have, by the definition \eqref{covariance} of the covariance function $Q_N$,
  $$I_2 = C\eps_N^2 \frac{N-1}{N} \int_0^T\!\!\int_{\T^4} (\nabla\phi( x_1))^\ast Q_N(x_1 -x_2) \nabla\phi( x_2)\, F^{N,2}_t(x_1, x_2) \,\d x_1\d x_2\d t.$$
Therefore, for any $M>1$,
  \begin{equation}\label{key-prop.2}
  \aligned
  |I_2| \leq &\ C \|\nabla\phi\|_\infty^2\, \eps_N^2 \int_0^T\!\!\int_{\T^4} \big| Q_N(x_1 -x_2) \big| F^{N,2}_t(x_1, x_2) \,\d x_1\d x_2\d t \\
  = &\ C \|\nabla\phi\|_\infty^2\, \eps_N^2 \int_0^T\!\!\int_{\left\{ F^{N,2}_t \leq M \right\}} \big| Q_N(x_1 -x_2) \big| F^{N,2}_t(x_1, x_2) \,\d x_1\d x_2\d t \\
  & + C \|\nabla\phi\|_\infty^2\, \eps_N^2 \int_0^T\!\!\int_{\left\{ F^{N,2}_t > M \right\}} \big| Q_N(x_1 -x_2) \big| F^{N,2}_t(x_1, x_2) \,\d x_1\d x_2\d t\\
  =: & \ J_N^{(1)} + J_N^{(2)}.
  \endaligned
  \end{equation}
For the first term, one has
  $$\aligned
  J_N^{(1)} &\leq C M \|\nabla\phi\|_\infty^2 \, \eps_N^2 \int_0^T\!\!\int_{\left\{ F^{N,2}_t \leq M \right\}} \big| Q_N(x_1 -x_2) \big| \,\d x_1\d x_2\d t \\
  &\leq C M \|\nabla\phi\|_\infty^2 \, T \eps_N^2 \int_{\T^4} \big| Q_N(x_1 -x_2) \big| \,\d x_1\d x_2 .
  \endaligned$$
By the proof of Lemma \ref{2-lem}, it is easy to see that
  \begin{equation}\label{key-prop.2.5}
  \eps_N^2 \big| Q_N(x_1 -x_2) \big| \leq 2\nu \quad \mbox{for all } x_1, x_2\in \T^2.
  \end{equation}
Hence by \eqref{main-thm.1} and the dominated convergence theorem,
  \begin{equation}\label{key-prop.3}
  \lim_{N\to \infty} J_N^{(1)} =0.
  \end{equation}

Next, using again the inequality \eqref{key-prop.2.5},
  $$\aligned
  J_N^{(2)} &\leq 2\nu\, C\|\nabla\phi\|_\infty^2 \int_0^T\!\!\int_{\left\{ F^{N,2}_t > M \right\}} F^{N,2}_t(x_1, x_2) \,\d x_1\d x_2\d t \\
  &\leq \frac{C' }{\log M} \int_0^T\!\!\int_{\left\{ F^{N,2}_t > 1 \right\}} \big( F^{N,2}_t \log F^{N,2}_t \big)(x_1, x_2) \,\d x_1\d x_2\d t.
  \endaligned $$
Recalling the simple fact that $s\log s \in [-e^{-1}, 0]$ for all $s\in [0,1]$, one can easily prove
  $$\int_{\left\{ F^{N,2}_t > 1 \right\}} \big(F^{N,2}_t \log F^{N,2}_t \big)(x_1, x_2) \,\d x_1\d x_2 \leq 2 h_2\big(F^{N,2}_t\big) + e^{-1}.$$
Therefore, by \eqref{entropy} and Lemma \ref{lem-entropy} above, we obtain
  $$J_N^{(2)} \leq  \frac{C' T}{\log M} \big[ h_1(f_0) + e^{-1}\big].$$
Combining this estimate with \eqref{key-prop.2} and \eqref{key-prop.3}, we conclude that $I_2$ tends to 0 as $N\to \infty$. In view of \eqref{key-prop.1}, this completes the proof.
\end{proof}

Next we turn to prove the convergence of the nonlinear part in \eqref{eq-new}. For this purpose, we make some preparations by introducing the Hamiltonian of point vortices: for $X=(x_1,\ldots, x_N)\in \T^{2N}$,
  $$\mathcal H_N(X)= \frac1{N^2} \sum_{1\leq i\neq j\leq N} \big[c_0 - G\big(x_i-x_j\big) \big],$$
where $c_0$ is a constant such that $G(x) \leq c_0,\, x\in \T^2$. We introduce the constant so that $\mathcal H_N$ is nonnegative. Recall that $X^N_t = \big( X^{N,1}_t, \ldots, X^{N,N}_t \big),\, t\geq 0 $ is the solution to the particle system \eqref{particle-systems}. We will show

\begin{lemma}\label{lem-Hamiltonian}
It holds that
  $$ \sup_{N\geq 2} \sup_{t\geq 0} \E \mathcal H_N\big(X^N_t \big) <+\infty.  $$
\end{lemma}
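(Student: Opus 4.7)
The plan is to avoid any Itô calculation (which would be delicate because $\mathcal H_N$ has a logarithmic singularity on the generalized diagonal $D_N$), and instead bound $\E \mathcal H_N(X^N_t)$ directly in terms of the two-particle density via the entropy estimate already in hand.

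Specifically, by exchangeability of the particle system \eqref{particle-systems}, for every $t\geq 0$,
\begin{equation*}
\E \mathcal H_N(X^N_t) = \frac{N-1}{N} \int_{\T^4} \big[c_0 - G(x_1 - x_2)\big]\, F^{N,2}_t(x_1, x_2)\,\d x_1\,\d x_2,
\end{equation*}
where $F^{N,2}_t$ is the joint density of $(X^{N,1}_t, X^{N,2}_t)$. So it suffices to bound the right-hand integral uniformly in $N$ and $t$.

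The main tool is the variational (Young) inequality: for any probability density $\rho$ on $\T^4$, any measurable $V\geq 0$ and any $\alpha>0$,
\begin{equation*}
\int_{\T^4} V \rho \,\d x \;\leq\; \frac{1}{\alpha}\bigg[\int_{\T^4} \rho \log \rho \,\d x \;+\; \log \int_{\T^4} e^{\alpha V}\,\d x\bigg].
\end{equation*}
I would apply this with $\rho = F^{N,2}_t$ and $V(x_1,x_2) = c_0 - G(x_1-x_2)\geq 0$. By \eqref{Green-funct}, $V(x_1,x_2) = -\frac{1}{2\pi}\log|x_1-x_2| + (c_0 - r(x_1-x_2))$, so near the diagonal $e^{\alpha V}$ behaves like $|x_1-x_2|^{-\alpha/(2\pi)}$ up to a bounded factor; hence $\int_{\T^4} e^{\alpha V}\,\d x_1\d x_2 =: C_\alpha < \infty$ for every $\alpha\in (0,4\pi)$. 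Fixing any such $\alpha$ gives
\begin{equation*}
\int_{\T^4} \big[c_0 - G(x_1-x_2)\big] F^{N,2}_t \,\d x_1\d x_2 \;\leq\; \frac{2}{\alpha}\, h_2\big(F^{N,2}_t\big) + \frac{\log C_\alpha}{\alpha},
\end{equation*}
since $\int F^{N,2}_t \log F^{N,2}_t = 2 h_2(F^{N,2}_t)$ by the definition of $h_2$.

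To close the estimate, combine the entropy sub-additivity \eqref{entropy} with the entropy decay Lemma \ref{lem-entropy}:
\begin{equation*}
h_2\big(F^{N,2}_t\big) \;\leq\; \frac{N}{N-1}\, h_N\big(F^N_t\big) \;\leq\; \frac{N}{N-1}\, h_1(f_0),
\end{equation*}
where $h_1(f_0) = \int_{\T^2} f_0\log f_0\,\d x$ is finite because $f_0\in L^2(\T^2)$ (so $f_0\in L\log L$). Putting everything together,
\begin{equation*}
\E \mathcal H_N(X^N_t) \;\leq\; \frac{2\, h_1(f_0) + \log C_\alpha}{\alpha},
\end{equation*}
which is uniform in $N\geq 2$ and $t\geq 0$, as claimed. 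I don't anticipate a real obstacle here; the only care needed is to justify the sharp exponential integrability of $e^{-\alpha G}$ on $\T^2\times \T^2$ (controlled entirely by the $\frac{1}{2\pi}\log|x|$ part of $G$ together with smoothness of $r$), and to use $L^2\subset L\log L$ on the torus to get $h_1(f_0)<\infty$.
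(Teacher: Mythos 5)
Your proof is correct and follows essentially the same route as the paper: reduce to the two-particle marginal $F^{N,2}_t$ by exchangeability, apply the variational (Young) entropy inequality against $V=c_0-G$, use the integrability of $e^{\alpha V}$ coming from the $\frac{1}{2\pi}\log|x|$ singularity of $G$, and close with the subadditivity \eqref{entropy} together with Lemma \ref{lem-entropy}. The only cosmetic difference is that you keep a free parameter $\alpha\in(0,4\pi)$ where the paper fixes $\alpha=1$ (absorbing constants into $c_1$), which changes nothing of substance.
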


\begin{proof}
By the definition of $\mathcal H_N\big(X^N_t \big)$ and the exchangeability,
  $$\E \mathcal H_N\big(X^N_t \big) = \frac1{N^2} \sum_{1\leq i\neq j\leq N} \E \big[c_0 - G\big(X^{N,i}_t -X^{N,j}_t\big) \big] = \frac{N-1}{N} \E \big[c_0 - G\big(X^{N,1}_t -X^{N,2}_t\big) \big]. $$
Using the joint density function $F^{N,2}_t$ of $\big( X^{N,1}_t, X^{N,2}_t \big)$, we have
  $$\E \mathcal H_N\big(X^N_t \big) \leq \int_{\T^4} [c_0 -G(x-y)] F^{N,2}_t(x,y)\,\d x\d y .$$
Thanks to the formula \eqref{Green-funct} of the Green function $G$ on $\T^2$, we can find some big $c_1$ such that
  $$c_0 -G(x-y) \leq \log\frac{c_1}{|x-y|^{1/2\pi}}, \quad x,y\in \T^2, x\neq y.$$
Therefore,
  $$\aligned
  \E \mathcal H_N\big(X^N_t \big) &\leq \int_{\T^4} \Big( \log\frac{c_1}{|x-y|^{1/2\pi}} \Big) F^{N,2}_t(x,y)\,\d x\d y \\
  &\leq \log\bigg[ \int_{\T^4 } \exp\Big(\log \frac{c_1}{|x-y|^{1/2\pi}} \Big)\,\d x\d y \bigg] + \int_{\T^4 } \big( F^{N,2}_t\log F^{N,2}_t\big) (x,y) \,\d x\d y ,
  \endaligned $$
where the second step follows from Young's inequality (cf. \cite[Lemma 6.45]{Stroock} or \cite[Lemma 2.4]{ATW}) and the fact that $F^{N,2}_t$ is a probability density on $\T^4$. Note that the last integral is nothing but $2 h_2\big(F^{N,2}_t\big)$; by \eqref{entropy} and Lemma \ref{lem-entropy}, we deduce that
  $$\E \mathcal H_N\big(X^N_t \big)\leq \log\bigg[ \int_{\T^4 } \frac{c_1}{|x-y|^{1/2\pi}}\,\d x\d y \bigg]+ 2 h_2\big(F^{N,2}_t\big) \leq c_2 + 4h_1(f_0) $$
for some constant $c_2>0$. The above bound is independent of $t\geq 0$ and $N\geq 1$.
\end{proof}

We can deduce the following non-concentration result for point vortices. Let $B_r(x)$ be a ball with center $x\in \T^2$ and radius $r>0$.

\begin{corollary}\label{cor-no-concentrat}
It holds that
  $$\lim_{n\to \infty} \lim_{r\to 0} \sup_{N\geq n} \E\bigg[\sup_{x\in \T^2} S^N_t(B_r(x)) \bigg] =0. $$
In particular, $\P$-a.s., for all $t\in [0,T]$, the limit $\xi_t$ is a continuous measure on $\T^2$, i.e. it does not contain delta Dirac mass.
\end{corollary}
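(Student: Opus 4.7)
The plan is a Markov-type argument driven by Lemma \ref{lem-Hamiltonian}, exploiting the logarithmic blow-up of $c_0 - G$ at the origin. From \eqref{Green-funct} we have a pointwise bound
$$c_0 - G(y) \geq \frac{1}{2\pi}\log\frac{1}{|y|} - C_0, \qquad y\in \T^2\setminus\{0\},$$
for some constant $C_0$. Since $c_0 - G \geq 0$ on $\T^2$ by the choice of $c_0$ (so every summand of $\mathcal H_N$ is nonnegative) and since a ball $B_r(x)$ containing $m$ particles contributes at least $m(m-1)$ ordered pairs of distinct particles at mutual distance $\leq 2r$, this yields, on the event $\{M_r(X^N_t) \geq m\}$ with $M_r(X) := \sup_{x\in\T^2} \#\{i : x_i \in B_r(x)\}$,
$$\mathcal H_N(X^N_t) \geq \frac{m(m-1)}{N^2}\bigg(\frac{1}{2\pi}\log\frac{1}{2r} - C_0\bigg),$$
which is a positive multiple of $|\log r|$ for $r$ small enough.

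Setting $m = \lceil \alpha N \rceil$, Markov's inequality combined with Lemma \ref{lem-Hamiltonian} gives, for $\alpha N \geq 2$ and $r$ sufficiently small,
$$\P\big(\sup_x S^N_t(B_r(x)) \geq \alpha\big) \leq \frac{C_1}{\alpha^2 |\log r|},$$
where $C_1$ depends only on $\sup_{N,t}\E\mathcal H_N(X^N_t)$ and $C_0$. Using $\sup_x S^N_t(B_r(x)) \leq 1$,
$$\E\sup_x S^N_t(B_r(x)) \leq \alpha + \frac{C_1}{\alpha^2|\log r|}.$$
For a given $n \in \N$, choosing $\alpha = 2/n$ makes the constraint $\alpha N \geq 2$ valid for every $N \geq n$, so
$$\sup_{N \geq n}\E\sup_x S^N_t(B_r(x)) \leq \frac{2}{n} + \frac{C_1 n^2}{4|\log r|};$$
sending $r \to 0$ first and then $n \to \infty$ proves the stated double limit.

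For the \emph{in particular} part, the almost sure convergence $S^{N_i}_\cdot \to \xi_\cdot$ in $C([0,T]; H^{-s}(\T^2))$ together with the density of $C^\infty(\T^2)$ in $C(\T^2)$ implies that $S^{N_i}_t \to \xi_t$ weakly as probability measures on $\T^2$ for every $t$, almost surely. Portmanteau applied to open balls, combined with the elementary inequality $\sup_x\liminf_i \leq \liminf_i\sup_x$, gives
$$\sup_{x}\xi_t(B_r(x)) \leq \liminf_i \sup_y S^{N_i}_t(B_r(y)),$$
whence Fatou and the first part of the corollary yield $\lim_{r \to 0}\E\sup_x \xi_t(B_r(x)) = 0$ for each $t$. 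Since $\xi_t(\{x\}) \leq \xi_t(B_r(x))$ for every $r > 0$, monotone convergence in $r$ combined with Fubini then show that, almost surely, $\xi_t$ has no atoms for a.e.\ $t \in [0,T]$. The main obstacle is the bookkeeping of the constraint $\alpha N \geq 2$ in the Markov step---this is precisely what dictates the somewhat unusual quantifier order $\lim_n\lim_r\sup_{N \geq n}$ in the statement---and it is handled cleanly by coupling $\alpha$ to $n$ via $\alpha = 2/n$; a secondary care-point is the one-sided interchange of $\sup_x$ with $\liminf_i$, which fortunately goes in the direction needed.
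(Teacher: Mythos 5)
Your argument is correct and rests on exactly the same two ingredients as the paper's proof: the uniform bound on $\E\,\mathcal H_N\big(X^N_t\big)$ from Lemma \ref{lem-Hamiltonian}, and the observation that $m$ particles in a ball of radius $r$ contribute at least $m(m-1)$ ordered pairs, each worth a constant multiple of $\log\frac1{2r}$ in the Hamiltonian thanks to the logarithmic singularity of $G$ in \eqref{Green-funct}. The only real difference is the final step: the paper converts the pair count directly into the pathwise inequality $\sup_{x\in\T^2}S^N_t(B_r(x))\leq N^{-1/2}+\big(2\pi\mathcal H_N(X^N_t)/\log\frac1{2r}\big)^{1/2}$ and then takes expectations (using Jensen), which yields the clean bound $n^{-1/2}+C|\log r|^{-1/2}$ with no thresholding, whereas you run a Markov inequality at level $\alpha$ and couple $\alpha=2/n$ to the index $n$; both give the stated iterated limit, the paper's route being marginally shorter and quantitatively sharper, while yours is a perfectly standard and correct variant. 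One small caveat on the second assertion: your Fatou/portmanteau/monotone-convergence argument delivers atomlessness of $\xi_t$ for each fixed $t$ almost surely (hence, via Fubini, a.e.\ $t$ almost surely), which is formally weaker than the stated ``$\P$-a.s., for all $t\in[0,T]$''; the paper does not prove this upgrade either, deferring it to Schochet's argument, so you are at the same level of rigor, but it is worth flagging that some extra input (e.g.\ restricting to rational $t$ and using the continuity of $t\mapsto\xi_t$ in $H^{-s}$ together with the monotonicity of $r\mapsto\xi_t(B_r(x))$) is needed to pass from a.e.\ $t$ to all $t$.
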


\begin{proof}
The second assertion follows from the first limit, as mentioned at the bottom of \cite[p.1086]{Schochet95}. To show the limit, we follow the idea in \cite[p.928, (3.5)]{Schochet96} which deals with the deterministic setting. Given small $r>0$, we have, for any $x\in \T^2$,
  $$\aligned
  \frac{\mathcal H_N\big(X^N_t \big)}{\log\frac1{2r}} &= \frac1{N^2} \sum_{1\leq i\neq j\leq N} \frac1{\log\frac1{2r}} \big[c_0 - G\big(X^{N,i}_t - X^{N,j}_t \big)\big]\\
  &\geq \frac1{N^2} \sum_{\substack{1\leq i\neq j\leq N\\ |X^{N,i}_t-x|\vee |X^{N,j}_t-x|<r}} \frac1{\log\frac1{2r}} \big[c_0 - G\big(X^{N,i}_t - X^{N,j}_t \big)\big],
  \endaligned $$
where $a\vee b =\max\{a,b\}$. Choosing a bigger $c_0$ if necessary, we can assume that
  \begin{equation}\label{cor-no-concentrat.1}
  c_0 - G(x-y) \geq \frac1{2\pi} \log\frac1{|x-y|} \quad \mbox{for all } x,y\in \T^2 ,\, 0<|x-y|\leq \frac12.
  \end{equation}
As a result,
  $$\aligned
  \frac{\mathcal H_N\big(X^N_t \big)}{\log\frac1{2r}} &\geq \frac1{2\pi} \frac1{N^2} \sum_{\substack{1\leq i\neq j\leq N\\ |X^{N,i}_t-x|\vee |X^{N,j}_t-x|<r}} \frac1{\log\frac1{2r}} \log\frac1{\big| X^{N,i}_t - X^{N,j}_t \big|} \\
  &\geq \frac1{2\pi} \sum_{\substack{1\leq i\neq j\leq N\\ |X^{N,i}_t-x|\vee |X^{N,j}_t-x|<r}} \frac1{N^2},
  \endaligned$$
where the second step follows from $\big| X^{N,i}_t - X^{N,j}_t \big|\leq \big| X^{N,i}_t-x \big| + \big|X^{N,j}_t-x \big|<2r$. Therefore,
  $$\aligned
  \frac{\mathcal H_N\big(X^N_t \big)}{\log\frac1{2r}} &\geq \frac1{2\pi} \Bigg[\bigg(\sum_{|X^{N,i}_t-x|<r} \frac1N \bigg)^2- \sum_{|X^{N,i}_t-x|<r} \frac1{N^2} \Bigg]  \\
  &\geq \frac1{2\pi} \bigg[\big(S^N_t(B_r(x)) \big)^2- \frac1{N} \bigg].
  \endaligned$$
This holds for any $x\in \T^2$, and thus we obtain
  $$\sup_{x\in \T^2} S^N_t(B_r(x)) \leq \frac1{\sqrt N} + \bigg(\frac{2\pi \mathcal H_N\big(X^N_t \big)}{\log\frac1{2r}}\bigg)^{1/2}. $$
Combining this inequality with Lemma \ref{lem-Hamiltonian} we immediately get the desired limit.
\end{proof}

We also have the following energy estimate on the weak limits.

\begin{corollary}\label{cor-regularity}
It holds that
  $$\sup_{t\in [0,T]} \E\<-G, \xi_t\otimes \xi_t\> < +\infty$$
and hence, $\P$-a.s., $\xi \in L^2\big(0,T; H^{-1}(\T^2)\big)$.
\end{corollary}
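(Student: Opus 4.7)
The strategy is to transfer the uniform Hamiltonian bound of Lemma \ref{lem-Hamiltonian} to the limit measure $\xi_t$ by truncating the singular potential $c_0-G$, and then identify $\<-G,\xi_t\otimes\xi_t\>$ with the squared $H^{-1}$-norm of $\xi_t$ modulo an additive constant.

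For each $R>0$ I set $\Psi_R(x):=(c_0-G(x))\wedge R$. Since $c_0-G\ge 0$ is continuous on $\T^2\setminus\{0\}$ and tends to $+\infty$ at the origin, $\Psi_R\in C_b(\T^2)$; hence $(x,y)\mapsto\Psi_R(x-y)$ lies in $C_b(\T^2\times\T^2)$. By the remark preceding \eqref{eq-new}, on a full-measure set of $\omega$ and for a.e.\ $t\in[0,T]$, $\xi_t\in\mathcal P(\T^2)$ and $S^{N_i}_t\to\xi_t$ weakly as probability measures, so $S^{N_i}_t\otimes S^{N_i}_t\to\xi_t\otimes\xi_t$ weakly on $\T^2\times\T^2$. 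Splitting off the diagonal and using $\Psi_R\le c_0-G$ away from it together with $\Psi_R(0)\le R$,
$$\<\Psi_R,S^N_t\otimes S^N_t\>=\frac1{N^2}\sum_{i\ne j}\Psi_R\big(X^{N,i}_t-X^{N,j}_t\big)+\frac{\Psi_R(0)}{N}\le\mathcal H_N\big(X^N_t\big)+\frac{R}{N}.$$
Fatou's lemma, the weak convergence above, and Lemma \ref{lem-Hamiltonian} then give, uniformly in $R>0$ and in $t$,
$$\E\<\Psi_R,\xi_t\otimes\xi_t\>\le\liminf_i\E\<\Psi_R,S^{N_i}_t\otimes S^{N_i}_t\>\le\sup_N\E\mathcal H_N\big(X^N_t\big)<\infty.$$
Letting $R\to\infty$, monotone convergence ($\Psi_R\uparrow c_0-G$, all nonnegative) together with $\xi_t(\T^2)=1$ yields the first assertion.

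For the $H^{-1}$-claim, I normalize $G$ so that $\int_{\T^2}G=0$; then $-\Delta G=\delta_0-1$ on $\T^2$. For $\xi_t\in\mathcal P(\T^2)$ I set $\phi_t:=-G\ast\xi_t$; this is mean-zero and satisfies $-\Delta\phi_t=\xi_t-1$, hence by the standard $H^{-1}$ potential representation on the torus
$$\|\xi_t-1\|_{H^{-1}(\T^2)}^2=\<\phi_t,\xi_t-1\>=\<\phi_t,\xi_t\>=\<-G,\xi_t\otimes\xi_t\>.$$
Combined with the first assertion, Fubini, and $\|1\|_{H^{-1}}<\infty$, this produces $\P$-a.s.\ $\xi\in L^2(0,T;H^{-1}(\T^2))$.

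The main obstacle is the logarithmic blow-up of $G$ at the diagonal, which forbids a direct weak-limit argument for $\<-G,\xi_t\otimes\xi_t\>$. The truncation-plus-monotone-convergence scheme succeeds precisely because $c_0-G$ is nonnegative (so Fatou applies) and because Corollary \ref{cor-no-concentrat} guarantees that no atomic mass concentrates on the diagonal in the limit; together with Lemma \ref{lem-Hamiltonian} these provide an $R$-independent bound that survives the passage $R\to\infty$.
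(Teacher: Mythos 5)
Your proof is correct and follows essentially the same route as the paper's: both transfer the uniform bound of Lemma \ref{lem-Hamiltonian} to the limit by replacing the singular kernel with bounded continuous approximations (the paper uses smooth $G_\eps\ge G$ equal to $G$ off $B_\eps(0)$ plus Fatou in $\eps$, you use the truncation $(c_0-G)\wedge R$ plus monotone convergence in $R$ --- an immaterial difference), and both then identify $\<-G,\xi_t\otimes\xi_t\>$ with the squared $H^{-1}$-type norm of $\xi_t$ via the stream function $-G\ast\xi_t$. The only blemish is a harmless sign slip in your stated convention for $G$ (with the paper's normalization $G\sim\frac1{2\pi}\log|x|$ one has $\Delta G=\delta_0-1$, not $-\Delta G=\delta_0-1$); your final identity and conclusion are unaffected.
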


\begin{proof}
By the definition of the Hamiltonian,
  $$\mathcal H_N\big(X^N_t \big) = \frac{N-1}N c_0 + \big\<-G, S^N_t \otimes S^N_t \big\>,$$
where we have made the convention that $G(x-x)\equiv 0$ for all $x\in \T^2$. By Lemma \ref{lem-Hamiltonian}, there is $L>0$ such that
  $$\sup_{N\geq 2} \sup_{t\in [0,T]} \E \big\< -G, S^N_t \otimes S^N_t \big\> \leq L .$$
For any $\eps>0$, take $G_\eps \in C^\infty(\T^2)$ such that $G_\eps \geq G$ and $G_\eps (x) =G(x)$ for all $|x|\geq \eps$. Then,
  $$ \E \big\< -G_\eps, S^N_t \otimes S^N_t \big\> \leq \E \big\< -G, S^N_t \otimes S^N_t \big\> \leq L $$
for any $N\geq 2$ and $t\in [0,T]$. As $G_\eps$ is smooth on $\T^2$ we can apply the dominated convergence theorem to get $\E \< -G_\eps, \xi_t \otimes \xi_t \> \leq L$. By Fatou's lemma, letting $\eps \to 0$ yields
  \begin{equation}\label{cor-regularity.1}
  \E \< -G, \xi_t \otimes \xi_t \> \leq L  \quad \mbox{for all } t\in [0,T].
  \end{equation}

Next, denoting by $\psi_t = - G\ast \xi_t$ the stream function, then
  \begin{equation}\label{cor-regularity.2}
  \< -G, \xi_t \otimes \xi_t \> = \<\psi_t , \xi_t\>
  \end{equation}
where $\<\cdot, \cdot\>$ is now the duality between $\psi_t$ and $\xi_t$. If we use the Fourier expansion
  $$\xi_t = \sum_{k\in \Z^2_0} \<\xi_t, e_k\> e_k, $$
then we have
  $$\psi_t = \frac1{4\pi^2} \sum_{k\in \Z^2_0} \frac1{|k|^2} \<\xi_t, e_k\> e_k,$$
and thus
  $$\<\psi_t , \xi_t\> = \frac1{4\pi^2} \sum_{k\in \Z^2_0} \frac1{|k|^2} \<\xi_t, e_k\>^2= \frac1{4\pi^2} \|\xi_t \|_{H^{-1}}^2. $$
We obtain from \eqref{cor-regularity.1} and \eqref{cor-regularity.2} that
  $$\E \big(\|\xi_t \|_{H^{-1}}^2 \big)\leq 4\pi^2 L $$
for all $t\in [0,T]$. This concludes the proof.
\end{proof}

Now we proceed to the proof of convergence of the nonlinear term in \eqref{eq-new}; recall that we omit the tilde over $\tilde S^{N_i}$ and write $N$ instead of $N_i$. Take $\rho\in C^\infty(\R_+, [0,1])$ with support in $[0,1]$ and $\rho|_{[0,1/2]} \equiv 1$; for any $\delta\in (0,1)$, let $\rho_\delta(t) = \rho(t/\delta),\, t\in \R$. Then, we have the decomposition below:
  $$\big\< S^{N}_s\otimes S^{N}_s, H_\phi \big\> = \int_{\T^4} H_\phi(x,y)\, S^{N}_s(\d x) S^{N}_s(\d y) = I^N_1(s) + I^N_2(s), $$
where
  $$\aligned
  I^N_1(s)&= \int_{\T^4} H_\phi(x,y)(1- \rho_\delta(|x-y|)) S^{N}_s(\d x) S^{N}_s(\d y),\\
  I^N_2(s)&= \int_{\T^4} H_\phi(x,y) \rho_\delta(|x-y|) S^{N}_s(\d x) S^{N}_s(\d y).
  \endaligned $$
Note that $|H_\phi(x,y)| \leq C\|\nabla^2 \phi\|_\infty$ for all $x\neq y$, and $H_\phi(x,x) \equiv 0$ by convention; thus, $\P$-a.s., $I^N_1(s)$ and $I^N_2(s)$ are uniformly bounded by $C\|\nabla^2 \phi\|_\infty$ for all $s\in (0,T)$. Moreover, $H_\phi(x,y)(1- \rho_\delta(|x-y|))$ is smooth on $\T^4$; by the $\P$-a.s. convergence of $S^{N}_\cdot$ to $\xi_\cdot$ in the topology of $C\big([0,T]; H^{-r}(\T^2)\big)$ for all $r>1$, we have, $\P$-a.s., for all $s\in (0,T)$,
  $$\lim_{N\to \infty} I^N_1(s)= \int_{\T^4} H_\phi(x,y)(1- \rho_\delta(|x-y|))\, \xi_s(\d x) \xi_s(\d y). $$

Next we estimate $I^N_2(s)$:
  $$\aligned
  \big| I^N_2(s)\big| &\leq C\|\nabla^2 \phi\|_\infty \frac1{N^2} \sum_{1\leq i\neq j\leq N} \rho_\delta\big(\big|X^{N,i}_s -X^{N,j}_s \big|\big) \\
  &\leq C\|\nabla^2 \phi\|_\infty \frac1{N^2} \sum_{1\leq i\neq j\leq N} \rho_\delta\big(\big|X^{N,i}_s -X^{N,j}_s \big|\big) \frac1{\log\frac1\delta} \log\frac1{\big|X^{N,i}_s -X^{N,j}_s \big|} ,
  \endaligned $$
where the second step is due to $\rho_\delta\big(\big|X^{N,i}_s -X^{N,j}_s \big|\big)=0$ for $\big|X^{N,i}_s -X^{N,j}_s \big|>\delta$. Using \eqref{cor-no-concentrat.1} and the fact that $0\leq \rho_\delta \leq 1$ we obtain
  $$\aligned \big| I^N_2(s)\big| &\leq C\|\nabla^2 \phi\|_\infty \frac{2\pi}{\log \frac1\delta} \frac1{N^2} \sum_{1\leq i\neq j\leq N} \big[c_0 - G\big(X^{N,i}_s -X^{N,j}_s \big) \big] \\
  &= C\|\nabla^2 \phi\|_\infty \frac{2\pi}{\log \frac1\delta} \mathcal H_N\big(X^N_s \big).
  \endaligned $$
Combining this estimate with Lemma \ref{lem-Hamiltonian} gives us
  \begin{equation*}
  \E \big| I^N_2(s)\big|\leq C'\frac{\|\nabla^2 \phi\|_\infty}{\log \frac1\delta} \quad \mbox{uniformly in } s\in (0,T),\, N\geq 1.
  \end{equation*}
Summarizing the above arguments and applying Lebesgue's dominated convergence theorem, we conclude that
  $$\aligned
  \lim_{N\to \infty} &\, \E \bigg[\sup_{t\in [0,T]} \bigg|\int_0^t \big\< S^{N}_s\otimes S^{N}_s, H_\phi \big\>\,\d s - \int_0^t \big\< \xi_s\otimes \xi_s, H_\phi \big\>\,\d s \bigg|\bigg] \\
  \leq&\ C'T \frac{\|\nabla^2 \phi\|_\infty}{\log \frac1\delta}+ \E \int_0^T \bigg|\int_{\T^4} H_\phi(x,y) \rho_\delta(|x-y|) \, \xi_s(\d x) \xi_s(\d y) \bigg|\,\d s.
  \endaligned $$
By the second assertion of Corollary \ref{cor-no-concentrat}, the right hand side vanishes as $\delta\to 0$, and thus
  $$\lim_{N\to \infty} \E \bigg[\sup_{t\in [0,T]} \bigg|\int_0^t \big\< S^{N}_s\otimes S^{N}_s, H_\phi \big\>\,\d s - \int_0^t \big\< \xi_s\otimes \xi_s, H_\phi \big\>\,\d s \bigg|\bigg] =0. $$

Combining the above limit with Proposition \ref{key-prop}, we can finally let $i\to \infty$ in \eqref{eq-new} to yield, for all $t\in [0,T]$,
  $$\<\xi_t,\phi\> = \<f_0,\phi\> + \int_0^t \big\< \xi_s\otimes \xi_s, H_\phi \big\>\,\d s + \nu \int_0^t \big\< \xi_s, \Delta \phi \big\>\,\d s. $$
Thus $\xi_\cdot$ satisfies the weak vorticity formulation of the deterministic 2D Navier-Stokes equation with initial data $f_0$, and thus we complete the proof of Theorem \ref{main-thm}.

\bigskip

\noindent \textbf{Acknowledgements.} The second author is grateful to the financial supports of the National Key R\&D Program of China (No. 2020YFA0712700), the National Natural Science Foundation of China (Nos. 11931004, 12090014) and the Youth Innovation Promotion Association, CAS (Y2021002).

\end{document}